\newtheorem*{rep@theorem}{\rep@title}
\newcommand{\newreptheorem}[2]{%
\newenvironment{rep#1}[1]{%
 \def\rep@title{#2 \ref{##1}}%
 \begin{rep@theorem}}%
 {\end{rep@theorem}}}
\newtheorem{intro_thm}{Theorem}
\newtheorem{intro_cor}[intro_thm]{Corollary}
\newtheorem{intro_defn}[intro_thm]{Definition}
\newtheorem{intro_ques}[intro_thm]{Question}
\newtheorem{lemma}{Lemma}[section]
\newtheorem{thm}[lemma]{Theorem} 
\newtheorem{prop}[lemma]{Proposition}
\newtheorem{cor}[lemma]{Corollary}
\theoremstyle{definition}
\newtheorem{defn}[lemma]{Definition}
\newtheorem{es}[lemma]{Example}
\newtheorem{setup}[lemma]{Setup}
\theoremstyle{remark}
\newtheorem{oss}[lemma]{Remark}
\newtheoremstyle{TheoremNum}
        {0.2 cm}{0.2 cm}              
        {\itshape}                      
        {}                              
        {}                     
        {.}                             
        { }                             
        {\thmname{\bfseries #1}\thmnote{ \bfseries #3}}
    \theoremstyle{TheoremNum}
\newtheorem{rec_thm}{Theorem}
\newtheorem{rec_cor}[rec_thm]{Corollary}
\newtheorem{rec_defn}[rec_thm]{Definition}
\newcommand{\id}{\mathrm{id}}
\newcommand{\Hm}{\textup{H}}
\newcommand{\Hb}{\textup{H}_{\textup{b}}}
\newcommand{\Hmb}{\textup{H}_{\textup{mb}}}
\newcommand{\Hn}{\textup{H}_n}
\newcommand{\Hk}{\textup{H}_k}
\newcommand{\Cb}{\textup{C}_{\textup{b}}}
\newcommand{\Cn}{\textup{C}_{n}}
\newcommand{\Cm}{\textup{C}}
\newcommand{\Ck}{\textup{C}_{k}}
\newcommand{\Linf}{\text{L}^{\infty}}
\newcommand{\Ima}{\textup{Im}}
\newcommand{\Id}{\textup{Id}}
\newcommand{\Sing}{\textup{Sing}}
\newcommand{\Linfw}{\textup{L}_{\textup{w}^*}^{\infty}}
\begin{document}

\title[Simplicial volume via foliated simplices and duality]{Simplicial volume via foliated simplices and duality}

\author[F. Sarti]{Filippo Sarti}
\address{Department of Mathematics, University of Pisa, Italy}
\email{filippo.sarti@dm.unipi.it}

\date{\today.}

\begin{abstract}
Let $M$ be a triangulated oriented closed connected manifold with universal cover $\widetilde{M}\to M$ and fundamental group $\Gamma=\pi_1(M)$ and consider an essentially free measure preserving action $\Gamma\curvearrowright (X,\mu)$ on a standard Borel probability space.
We study the space $\Gamma\backslash(\widetilde{M}\times X)$ equipped with the measured foliation defined by Sauer and the theory of singular foliated simplices in this setting.
We define its real singular foliated homology and compare it to classical singular homology. In particular, we construct a foliated fundamental class and prove that its norm coincides with the simplicial volume of $M$, formalizing ideas of Gromov. 
Passing to the dual chain complex, we define the singular foliated bounded cohomology. When $M$ is aspherical we establish an isometric isomorphism with the measurable bounded cohomology of the action groupoid
$\Gamma\curvearrowright X$. As a consequence of a foliated duality principle, we odeduce vanishing criteria for the simplicial volume of $M$ in terms of the vanishing of the measurable bounded cohomology of the action groupoid and/or of its transverse groupoids. 
\end{abstract}
  
\maketitle

\section{Introduction}

The role of dynamics in the interaction between the geometry of manifolds and the algebra of their fundamental group was probably first understood by Gromov and dates back to the 80's. This breakthrough intuition gave birth to still actual research fields going by the name of \emph{geometric group theory} and \emph{measured group theory}. In this paper we follow this flow, being particularly interested in the interplay between a topological invariant called \emph{simplicial volume} and measure preserving (p.m.p) actions of groups on standard Borel probability spaces.
In his book ``\emph{Metric structures for Riemannian and non-Riemannian spaces}" \cite{gromov:libro} Gromov outlined a strategy to study simplicial volume via foliations, hinting a possible definition of simplicial volume for a \emph{measured foliation with transverse measure} (see also\cite[Section 2.4.B]{gromov:fpp}). 
The rough idea, that is attributed to Connes \cite[pg. 303]{gromov:libro}, is first to take a \emph{measure-theoretic} version of singular simplices on the foliation with nice transversality and finiteness properties. Then, one should be able to compare the homology of foliated simplices with the classic singular homology of the leaves and to define a notion of \emph{fundamental class} for the foliation. The norm of the latter would be the simplicial volume of the given foliation.
For example, given a $n$-manifold $M$ with fundamental group $\Gamma=\pi_1(M)$ and universal cover $\widetilde{M}\to M$ together with an essentially free measure preserving action $\Gamma\curvearrowright (X,\mu)$ on a standard Borel probability space, one can consider the measured foliation $\Gamma\backslash(\widetilde{M}\times X)$.
Here, each leaf is a copy of $\widetilde{M}$. 
In this case Gromov claims that the norm of the foliated fundamental class, that we temporarily denote by $[\Gamma\backslash(\widetilde{M}\times X)]$, coincides with the $\ell^1$-norm of the real fundamental class of $M$, namely its simplicial volume $||M||$. He writes that \emph{``...by Connes theorem, we have $|[\Gamma\backslash(\widetilde{M}\times X)]|=||M||$ for all $X$''} \cite[pg. 303]{gromov:libro}. Unfortunately, neither a precise definition of measurable simplex and of fundamental class nor a (reference for a) proof of the above statement is provided. One of the purposes of this paper is exactly to formalize the ideas sketched above and to give both precise definitions and rigorous proofs. Concretely, we first aim to prove the following result, that should correspond to what Gromov called \emph{``Connes theorem"} in \cite[pg. 303]{gromov:libro}. 
\begin{intro_thm}\label{theorem:simplicial:volume}
  Let $M$ be a triangulated oriented closed connected $n$-manifold. Let $p: \widetilde{M}\to M$ be its universal cover and consider an essentially free measure preserving action $\Gamma=\pi_1(M)\curvearrowright (X,\mu)$ on a standard Borel probability space. Then 
  $$||M||= |[\Gamma\backslash(\widetilde{M}\times X)]|\,.$$
\end{intro_thm}
Before sketching the proof, we need to clarify all the terminology appearing in the statement. We start with a brief description of classic notions such as \emph{simplicial volume} and \emph{bounded cohomology}.

\subsection*{Simplicial volume.}
The \emph{simplicial volume} is a topological invariant of manifolds intimately connected with the geometry that they can carry and with the algebraic-dynamical properties of their fundamental group. 
First introduced by Gromov and used in the beautiful geometric proof of Mostow Rigidity \cite{Thurston}, its study became soon an independent research field thanks to the connection with geometric group theory. 
Given an oriented closed connected manifold $M$, its simplicial volume $||M||$ is defined by taking the infimum among the \emph{$\ell^1$-norm} of all representatives of the fundamental class (Definition \ref{def:simp:vol}). Despite the simple definition, a direct computation is generally impossible. Indeed, the exact value of such invariant is known only for few classes of manifolds and most of the significant results rely on geometric or algebraic arguments. This is the case of hyperbolic manifolds, for which it holds a proportionality with the Riemannian volume \cite{loh}, or of manifolds with \emph{amenable} fundamental group, whose simplicial volume vanishes \cite[Corollary 7.12]{miolibro}. 

\subsection*{(Dicrete) bounded cohomology.}
Amenability of groups is an algebraic-dynamical property of groups that can be partially detected considering the dual theory of simplicial volume, namely \emph{bounded cohomology}. The latter was introduced by Johnson and Trauber in the '70s \cite{Johnson} and is the cohomology obtained by taking singular cochains that are bounded with respect to the norm induced by the $\ell^1$-norm on the dual. 
Its connection with geometry and in particular with simplicial volume was first discovered by Gromov. In particular, thanks to the \emph{duality principle} \cite[Proposition 7.10]{miolibro}, the simplicial volume of a manifold $M$ vanishes if and only if the comparison map $\Hb^n(M;\mathbb{R})\to \Hm^n(M;\mathbb{R})$ is zero. Furthermore, as in the unbounded case, the bounded cohomology of $M$ coincides with the one of its fundamental group \cite[Theorem 5.9]{miolibro}. 
As a particular consequence, since the bounded cohomology of amenable groups is zero in all positive degrees, the simplicial volume of a manifold vanishes as soon as its fundamental group is amenable.
It becomes clear that studying simplicial volume is very often related to study bounded cohomology. 
Unfortunately, explicit computations are difficult to obtain also in bounded cohomology.
However, the relation between these invariants showcases how the simplicial volume of a manifold is influenced by the algebraic and the dynamical properties of its fundamental group. 
This fact motivated several authors to approach simplicial volume from a dynamical point of view, for instance by studying a measure-theoretic variant called \emph{integral foliated simplicial volume} \cite{gromov:libro,LP,FLPS}, or by investigating the behaviour with respect to equivalence relations arising in ergodic theory, for example \emph{orbit} and \emph{measure equivalence} \cite{BFS}. 

\subsection*{Measurable bounded cohomology.}
Motivated by role of bounded cohomology in dynamics and by previous works about measurable cocycles and their rigidity, the author and Savini recently introduced a new notion of bounded cohomology for \emph{measured groupoids} \cite{sarti:savini:groupoids}. For example, given an essentially free measure preserving action $\Gamma\curvearrowright (X,\mu)$, the \emph{measurable bounded cohomology} (with trivial coefficients) of the action groupoid $\Gamma\ltimes X$
is obtained from the complex of classes of $\Gamma$-invariant $\mathbb{R}$-valued essentially bounded measurable functions on the product $\Gamma^{\bullet+1}\times X$. The definition comes as a natural straightening of Monod' work about continuous bounded cohomology for topological groups \cite{monod:libro} and in fact the theories are connected by the following \emph{exponential isomorphism} \cite[Theorem 1]{sarti:savini:groupoids}
$$\Hmb^k(\Gamma\ltimes X;\mathbb{R})\cong \Hb^k(\Gamma;\Linf(X,\mathbb{R}))\,$$
that holds for every $k\geq 0$.

\subsection*{Foliated singular homology.}
Going back to foliations and to the statement of Theorem \ref{theorem:simplicial:volume}, we need to fix a suitable definition of \emph{simplices}.
Over the past three decades, measurable analogues of singular simplices have been studied by various authors in different contexts. Notably, Gaboriau developed the theory of \emph{$\mathcal{R}$-simplicial complexes} and their homology to study the behaviour of $\ell^2$-Betti numbers under measure equivalence \cite{Gaboriau2002}. Later on, in his PhD thesis Schmidt \cite{schmidt} introduced the \emph{$\mathcal{R}$-homology} of \emph{$\mathcal{R}$-spaces}, which are fiberwise geometric realizations of $\mathcal{R}$-simplicial complexes.
He further provided a homological characterization and some applications to $\ell^2$-Betti numbers. 
For our purposes, we adopt the approach developed by Sauer, who defined \emph{singular foliated simplices} and employed them as a tool to study the relation between $\ell^2$-Betti numbers and the (minimal) volume of aspherical manifolds \cite{sauer}. 
It is important to emphasize that Sauer's definition (like ours) does not coincide with Schmidt's. The differences between these approaches will be discussed in more detail in Remark \ref{remark:diff}.
To build some intuition, we now focus on the $\mathcal{R}$-space $\Gamma \backslash (\widetilde{M} \times X)$ introduced earlier. Roughly speaking, in this setting a \emph{singular foliated $k$-simplex} is a countable-to-one measurable map from a subset $A$ of the reals of finite Lebesgue measure to the quotient $\Gamma\backslash(\Sing_k(\widetilde{M})\times X)$ such that: (i) the induced \emph{natural measure} coincides with the Lebesgue measure of $A$; (ii) the chain defined on each leaf is locally finite (Definition \ref{definition:singular:simplex}). We warn the reader that the precise meaning of words like ``measurable" and ``natural measure" requires a careful and systematic study that we postpone to Section \ref{section:foliated}.
With such simplices one can build (integral or real) chains and, after discarding ``null chains" in order to avoid non-zero objects with zero norm, define the \emph{singular foliated homology} of the foliation $\Gamma \backslash(\widetilde{M}\times X)$. 
For our scopes, in this paper we replace the integral coefficients used by Sauer \cite{sauer} with real coefficients, thus we consider the \emph{real singular foliated homology} $\Hk(\Gamma \backslash(\widetilde{M}\times X);\mathbb{R})$ (Definition \ref{definition:homology}). 
Furthermore, instead of taking the \emph{mass} of simplices (which takes into account the measure of the support of their image), we consider the \emph{weight}, namely the support of their domain (Definition \ref{definition:singular:simplex}). This turns the chain complex of singular foliated chains into a normed chain complex.

In order to recover the usual notion of simplicial volume, the following task is to find a suitable notion of \emph{fundamental class} in this setting.
The idea of Sauer \cite[Lemma 3.23]{sauer} was to construct a map 
$$\Hn(M;\Linf(X,\mathbb{Z}))\to \Hn(\Gamma \backslash(\widetilde{M}\times X))$$
and to take the image under such map of the \emph{$X$-fundamental class} $[M]_{\mathbb{Z}}^X\in \Hn(M;\Linf(X,\mathbb{Z}))$ (Definition \ref{def:fund:classes}). However, when the coefficients are reals, his construction does not adapt straightforwardly and needs a slight refinement. 
More precisely, we replace the space $\Cn(\widetilde{M})\otimes_{\mathbb{Z}\Gamma} \Linf(X,\mathbb{Z})$ with it ``realification''
$\Cn(\widetilde{M})\otimes_{\mathbb{Z}\Gamma} \Linf(X,\mathbb{Z})\otimes_{\mathbb{Z}\Gamma} \mathbb{R}$. The latter carries a natural injection of both real singular chains and integral parametrized chains, so that the following diagram made by inclusions commutes
\begin{center}
  \begin{tikzcd}
  \Cn(M)\arrow{r}{i}\arrow{d}[swap]{j}&\Cn(\widetilde{M})\otimes_{\mathbb{Z}\Gamma} \Linf(X,\mathbb{Z})\arrow{d}{\ell}\\
  \Cn(M;\mathbb{R})\arrow{r}{k}&\Cn(\widetilde{M})\otimes_{\mathbb{Z}\Gamma}\Linf(X,\mathbb{Z})\otimes_{\mathbb{Z}\Gamma} \mathbb{R}\,.
  \end{tikzcd}
\end{center}
The \emph{real $X$-fundamental class} $[M]^X_{\mathbb{R}}\in \Hn(M;\Linf(X,\mathbb{Z})\otimes_{\mathbb{Z}\Gamma}\mathbb{R})$ is the image of the real fundamental class under the map induced in homology by the bottom horizontal map (or, equivalently, of the $X$-fundamental class under the map induced in homology by the right vertical map). 
We are now able to adapt Sauer's construction, namely to build a map (Lemma \ref{lemma:lambda})
$$\Lambda_n:\Hn(M;\Linf(X,\mathbb{Z})\otimes_{\mathbb{Z}\Gamma} \mathbb{R})\to \Hn(\Gamma \backslash(\widetilde{M}\times X);\mathbb{R})\,.$$
The \emph{$X$-foliated fundamental class} $[\Gamma\backslash(\widetilde{M}\times X)]$ is the image of the real $X$-fundamental class under $\Lambda_n$ (Definition \ref{def:fund}). 

Once all the ingredients are in place we can move to the proof of Theorem \ref{theorem:simplicial:volume}, which requires two steps. The first one is to show that the norm of the real $X$-fundamental class coincides with the simplicial volume of $M$, that is the norm of its real fundamental class. This is an easy computation (Lemma \ref{lemma:simplicial:volume}).
The second step is more challenging and consists to show that $\Lambda_n$ is isometric. Here we use arguments due to Sauer to approximate foliated chains with chains in $\Cn(\widetilde{M})\otimes_{\mathbb{Z}\Gamma} \Linf(X,\mathbb{Z}) \otimes_{\mathbb{Z}\Gamma} \mathbb{R}$ (Lemma \ref{lemma:lambda}).

\medskip

\subsection*{Singular foliated bounded cohomology.}
In the second part of the paper we reverse our perspective. Motivated by the duality between simplicial volume and bounded cohomology, we consider the topological dual of singular foliated chains
$$(\Cb^{\bullet}(\Gamma \backslash(\widetilde{M}\times X); \mathbb{R}),\delta^{\bullet})\coloneqq(    \Cm_{\bullet}(\Gamma\backslash(\widetilde{M}\times X);\mathbb{R}),\partial_{\bullet})'\,.$$
\begin{intro_defn}\label{def:foliated:bc}
  The \emph{singular foliated bounded cohomology} of $M$ is 
  $$\Hb^k(\Gamma \backslash(\widetilde{M}\times X); \mathbb{R})\coloneqq \Hm^k(
      \Cb^{\bullet}(\Gamma \backslash(\widetilde{M}\times X); \mathbb{R}),\delta^{\bullet})\,.$$
\end{intro_defn}

Our last goal is to relate the above cohomology with the ordinary bounded cohomology of $M$ (or, equivalently, of its fundamental group). 
To this end we need to introduce a non-equivariant version of singular foliated simplices (Definition \ref{definition:singular:simplex:non:eq}) and to show that the associated cochain complex $(\Cb^{\bullet}(\widetilde{M}\times X; \mathbb{R}),\delta^{\bullet})$ satisfies the hypothesis of the Fundamental Lemma of Homological Algebra for bounded cohomology (Lemma \ref{lemma:fundamental}). This approach presents a technical difficulty that we overcome by passing to the \emph{completion} of the involved normed chain complexes, in order to admit infinite sums as representatives for chains (refer to Section \ref{sec:completion} for all the details). 
Then, we show that each new module is a \emph{relatively injective} $\Gamma$-module. Moreover, we add to the complex an augmentation map $\Linf(X,\mathbb{R})\to \Cb^0(\widetilde{M}\times X;\mathbb{R})'$, turning it into a \emph{strong} resolution of the $\Gamma$-module $\Linf(X,\mathbb{R})$.
As a consequence we deduce the following
\begin{intro_thm}\label{theorem:bounded:cohomology}
  Let $M$ be a triangulated oriented closed connected aspherical manifold. Let $p: \widetilde{M}\to M$ be its universal cover and consider an essentially free measure preserving action $\Gamma=\pi_1(M)\curvearrowright (X,\mu)$ on a standard Borel probability space. Then we have canonical isometric isomorphisms 
  $$\Hb^k(\Gamma \backslash(\widetilde{M}\times X); V)\cong \Hb^k(\Gamma;\Linf(X,\mathbb{R}))\cong \Hmb^k(\Gamma\ltimes X;\mathbb{R})$$
  for every $k\geq 0$. 
\end{intro_thm}

As an application of Theorem \ref{theorem:bounded:cohomology} we obtain a different proof of the following criterion for the vanishing of simplicial volume. This was implicitly contained in the work by Bader, Furman and Sauer \cite{BFS} as a consequence of duality with (non-trivial) coefficients \cite[Proposition 2.3.1]{monod:libro}.

\begin{intro_cor}\label{corollary:vanishing}
  Let $M$ be a triangulated oriented closed connected aspherical $n$-manifold. Let $p: \widetilde{M}\to M$ be its universal cover and consider an essentially free measure preserving action $\Gamma=\pi_1(M)\curvearrowright (X,\mu)$ on a standard Borel probability space. If $\Hmb^n(\Gamma\ltimes X;\mathbb{R})$ (equivalently $\Hb^n(\Gamma;\Linf(X,\mathbb{R}))\cong \Hb^n(M;\Linf(X,\mathbb{R}))$) vanishes, then also $||M||$ does. 
\end{intro_cor}

We conclude the paper with an application to \emph{transverse measured groupoids}, which combines Corollary \ref{corollary:vanishing} with the theory of bounded cohomology of groupoids due to the author and Savini \cite{sarti:savini:groupoids} and recent results by Hartnick and the author about cohomological induction \cite{HS}.
Given a p.m.p action $\Gamma\curvearrowright (X,\mu)$, a \emph{cross-section} is a measurable subset $Y\subset X$ meeting any orbit (Definition \ref{def:cross}). The associated \emph{transverse measured groupoid} is the restriction $\Gamma\ltimes X|_Y$, equipped with the measure structure described for instance by Bj\"orklund, Hartnick and Karasik in \cite{BHK}. Thanks to the isomorphism $\Hmb^n(\Gamma\ltimes X|_Y;\mathbb{R})\cong \Hmb^n(\Gamma\ltimes X;\mathbb{R})$ proved by Hartnick and the author \cite{HS}, we obtain the following result. 

\begin{intro_cor}\label{corollary:transverse}
  Let $M$ be a triangulated oriented closed connected aspherical $n$-manifold. Let $p: \widetilde{M}\to M$ be its universal cover and consider an essentially free measure preserving action $\Gamma=\pi_1(M)\curvearrowright (X,\mu)$ on a standard Borel probability space. Let $Y\subset X$ be any cross-section. If $\Hmb^n(\Gamma\ltimes X|_Y;\mathbb{R})$ vanishes, then also $||M||$ does. 
\end{intro_cor}

On one hand, Corollary \ref{corollary:vanishing} and Corollary \ref{corollary:transverse} provide a new way to produce examples of manifolds with vanishing simplicial volume. One the other one, the lack of concrete computations in bounded cohomology damps the enthusiasm about their effective impact and rises the following 
\begin{intro_ques}\label{question}
  Let $M$ be a triangulated oriented closed connected aspherical $n$-manifold whose fundamental group $\Gamma$ contains an infinite amenable normal subgroup. Is there any p.m.p action $\Gamma\curvearrowright (X,\mu)$ such that $\Hmb^n(\Gamma\ltimes X;\mathbb{R})=0$?
  Is there any such action admitting a transverse measured groupoid with vanishing bounded cohomology, at least in degree $n$?
  \end{intro_ques}
We point out that a positive answer to the above question would provide new examples of manifolds satisfying a question posed by L\"uck \cite[Question 14.39]{luck}.

\vspace{5pt} 
\paragraph{\textbf{Structure of the paper.}}
The paper is divided in five sections. In Section \ref{sec:prel} we recall the basics about bounded cohomology of groups, topological spaces and measured groupoids (Section \ref{sec:bc:prel}) and their homological algebraic characterization (Section \ref{sec:homo}). In Section \ref{sec:fund} we focus on the notion(s) of fundamental class(es) and how they are related to simplicial volume. Then we move to Section \ref{section:R-category}, where we describe the categories of $X$-spaces (Section \ref{sec:X:spaces}) and $X$-maps and of $\mathcal{R}$-spaces and geometric maps (Section \ref{sec:R:spaces}). Then, in Section \ref{sec:foliated}, we define foliated singular simplices and their homology and we conclude with Section \ref{sec:funct} proving some functorial properties.
In Section \ref{section:foliated} we apply this theory for the $\mathcal{R}$-space $\widetilde{M}\times X$, giving the proof of Theorem \ref{theorem:simplicial:volume}. Finally, in Section \ref{section:foliated:cohomology}, we consider the singular foliated bounded cohomology, we prove Theorem \ref{theorem:bounded:cohomology} and its corollaries.
\vspace{5pt}

\paragraph{\textbf{Acknowledgements.}} 
The author is indebted to Federica Bertolotti, Michelle Bucher, Pietro Capovilla, Stefano Francaviglia, Thorben Kastenholz, Roman Sauer and Alessio Savini for valuable discussions and advices happened in different moments during the drafting of this paper. 
I am truly grateful to Marco Moraschini, for his patience in reading a preliminary draft of this paper and for his precious comments.
Finally, I express my gratitude to the anonymous referee, who pointed out some weaknesses in a previous version of this article and allowed me to improve its quality.
The author was partially supported by INdAM through GNSAGA, and by MUR through the PRIN project ``Geometry and topology of manifolds".


\section{Preliminaries}\label{sec:prel}
For the material described in this section the reader should consider standard references as \cite{Ivanov,monod:libro,miolibro,sarti:savini:groupoids,sarti:savini25}.

\subsection{Bounded cohomology of groups, spaces and actions.}\label{sec:bc:prel}
Let $\Gamma$ be a discrete countable group and consider a normed $\Gamma$-module, that is a normed vector space $V$ with an isometric $\Gamma$-action
$$\Gamma\times V\to V\,,\quad (\gamma,v)\mapsto \gamma\cdot v\,.$$
We set 
$$\Cm^k(\Gamma;V)\coloneqq \{f:\Gamma^{k+1}\to V\}\,$$
and define 
$$\delta^k:\Cm^k(\Gamma;V)\to \Cm^{k+1}(\Gamma;V)$$
as $$ \delta^k f(\gamma_0,\ldots,\gamma_{k+1})\coloneqq \sum\limits_{i=0}^{k+1} (-1)^i f(\gamma_0,\ldots,\widehat{\gamma_i},\ldots,\gamma_{k+1})\,.$$
It is easy to check that $\delta^{k+1}\circ \delta^k=0$ and $||\delta^k||_{\text{op}}\leq k+1$ where $||\cdot||_{\text{op}}$ is the operator norm. 
Thus the sets
$$\Cb^k(\Gamma;V)\coloneqq \left\{f\in \Cm^k(\Gamma;V)\,,\, \sup_{\gamma_0,\ldots,\gamma_{k}} ||f(\gamma_0,\ldots,\gamma_{k})||<+\infty\right\}\,.$$
form a cochain complex $(\Cb^\bullet(\Gamma;V),\delta^{\bullet})$. 
Moreover, each $\Cb^k(\Gamma;V)$ is a normed $\Gamma$-module with the action 
$$(\gamma\cdot f)(\gamma_0,\ldots,\gamma_{k})\coloneqq \gamma\cdot f(\gamma^{-1}\gamma_0,\ldots,\gamma^{-1}\gamma_{k})$$
and the supremum norm $||\cdot||_{\infty}$.
\begin{defn}
  The \emph{bounded cohomology} of $\Gamma$ with coefficients in $V$ is the cohomology of the subcomplex of $\Gamma$-invariants $(\Cb^\bullet(\Gamma;V)^\Gamma,\delta^{\bullet})$,
  namely $$\Hb^k(\Gamma, V)\coloneqq \Hm^k(\Cb^\bullet(\Gamma;V)^{\Gamma},\delta^{\bullet})\,.$$
  The norm $||\cdot||_{\infty}$ on $\Cb^k(\Gamma;V)$ descends to a \emph{semi-norm} on $\Hb^k(\Gamma, V)$, still denoted $||\cdot||_{\infty}$.
\end{defn}

Let $Y$ be a topological space with fundamental group $\Gamma=\pi_1(Y)$ and $(\Cm^\bullet(Y;V),\delta^{\bullet})$ be its complex of singular cochains with coefficients in $V$.
Define
$$\Cb^{k}(Y;V)\coloneqq \left\{f\in \Cm^{k}(Y;V)\,,\, \sup_{s\in \Sing_{k}(Y)} ||f(s)||<+\infty\right\}\,,$$
where $\Sing_k(Y)$ is the set of singular $k$-simplices of $Y$. Then $||\delta^{n}||_{\text{op}}\leq k+1$, whence $\delta^{k}$ restricts to bounded cochains. 
\begin{defn}
  The \emph{bounded cohomology} of $Y$ with coefficients in $V$ is the cohomology of the complex
  $(\Cb^\bullet(Y;V),\delta^{\bullet})$,
  namely $$\Hb^k(Y, V)\coloneqq \Hm^k(\Cb^\bullet(Y;V),\delta^{\bullet})\,.$$
  The operator norm on each $\Cb^{k}(Y;V)$ descends to a semi-norm, denoted again $||\cdot||_{\infty}$.
\end{defn}

\medskip
Assume now that $Y$ is also path connected and admits a universal cover $p: \widetilde{Y}\to Y$. 
As above, we consider the cochain complex
$(\Cb^{\bullet}(\widetilde{Y};V),\delta^{\bullet})$ of singular cochains in $\widetilde{Y}$, which is endowed with the $\Gamma$-action inherited from the deck transformations of $p:\widetilde{Y}\to Y$. Then we have a natural isometric identification
$$\Cb^{k}(Y;V)\cong \Cb^{k}(\widetilde{Y};V)^{\Gamma}$$
in every degree
that descends in homology to a canonical isometric isomorphism
\begin{equation}\label{equation:isom:iso0}
  \Hb^k(Y;V)\cong \Hm^k(\Cb^{\bullet}(\widetilde{Y};V)^{\Gamma},\delta^{\bullet})\,.
\end{equation}
Such obvious identification is the base step to connect the bounded cohomology of $Y$ with the one of its fundamental group. 
\begin{thm}[Gromov, Ivanov]
Let $Y$ be a path connected topological space with fundamental group $\Gamma$ and universal cover $p:\widetilde{Y}\to Y$. Then we have isometric isomorphisms
\begin{equation}\label{equation:mapping:thm}
\Hb^{k}(\Gamma;\mathbb{R})\cong \Hb^{k}(Y;\mathbb{R})
\end{equation} for every $k\geq 0$. 

If $Y$ is moreover aspherical, then the above isometric isomorphism holds for every Banach $\Gamma$-module $V$, namely
\begin{equation}\label{equation:mapping:thm:coefficients}
  \Hb^{k}(\Gamma;V)\cong \Hb^{k}(Y;V)
  \end{equation} for every $k\geq 0$.
\end{thm}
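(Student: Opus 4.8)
The statement to prove is the Gromov--Ivanov theorem: for a path connected space $Y$ with fundamental group $\Gamma$ and universal cover $\widetilde Y$, one has isometric isomorphisms $\Hb^k(\Gamma,\mathbb{R})\cong \Hb^k(Y,\mathbb{R})$, and when $Y$ is aspherical the same holds with coefficients in any Banach $\Gamma$-module $V$.

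The plan is to exploit the machinery of the \emph{Fundamental Lemma of Homological Algebra} in bounded cohomology (recalled in Section \ref{sec:homo}): bounded cohomology of $\Gamma$ with coefficients in a Banach $\Gamma$-module $W$ can be computed by any \emph{strong resolution of $W$ by relatively injective $\Gamma$-modules}, and the comparison between two such resolutions induced by a $\Gamma$-morphism lifting the identity on $W$ is canonical and norm non-increasing, hence isometric once one has maps in both directions. So the strategy is to produce, on each side, such a resolution and to identify the resulting complexes.

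First I would recall that the standard homogeneous complex $(\Cb^\bullet(\Gamma,V),\delta^\bullet)$, augmented by $V\to \Cb^0(\Gamma,V)$, is a strong resolution of $V$ by relatively injective $\Gamma$-modules; this computes $\Hb^\bullet(\Gamma,V)$ \emph{isometrically} since the norm on the homogeneous complex agrees with the $\ell^\infty$ norm. Next, using the isometric identification \eqref{equation:isom:iso0}, it suffices to show that the complex $(\Cb^\bullet(\widetilde Y,V),\delta^\bullet)$ of bounded singular cochains on the universal cover, augmented by $V\to \Cb^0(\widetilde Y,V)$ (the constant functions), is a strong resolution of $V$ by relatively injective $\Gamma$-modules; then the Fundamental Lemma gives a canonical isometric isomorphism in $\Gamma$-invariants, which combined with \eqref{equation:isom:iso0} yields \eqref{equation:mapping:thm:coefficients}, and \eqref{equation:mapping:thm} is the special case $V=\mathbb{R}$ (where no asphericity is needed, see below). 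The two properties to verify are: (a) \textbf{strongness}, i.e.\ the augmented complex is exact and admits a contracting homotopy of norm $\le 1$ as a complex of \emph{Banach spaces} (forgetting the $\Gamma$-action); (b) \textbf{relative injectivity} of each $\Cb^k(\widetilde Y,V)$ as a $\Gamma$-module.

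For (a), I would first pass to $V=\mathbb{R}$ when it suffices (for \eqref{equation:mapping:thm}), and treat the general Banach-coefficient case assuming $Y$ aspherical. The contracting homotopy is the classical cone operator: fixing a basepoint $\widetilde y_0\in\widetilde Y$, coning every singular simplex to $\widetilde y_0$ defines a partial inverse to $\delta$ on cochains, with operator norm $\le 1$; exactness of the augmented complex amounts to acyclicity of bounded singular cohomology of $\widetilde Y$ in positive degrees, which holds because $\widetilde Y$ is contractible when $Y$ is aspherical — and for $V=\mathbb{R}$ one can instead invoke the standard fact that $\widetilde Y$ is simply connected and use Ivanov's bounded-cohomology acyclicity argument via the cone on the universal cover (this is why \eqref{equation:mapping:thm} needs only simple connectivity, while coefficients require contractibility). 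For (b), relative injectivity of $\Cb^k(\widetilde Y,V)$ follows from the action of $\Gamma$ on $\widetilde Y$ being free: choosing a set-theoretic section of $\widetilde Y\to Y$ (a fundamental domain) gives, for any diagram to be extended, an explicit averaging-free extension formula of norm $\le 1$, exactly as in Ivanov's or Monod's treatment of the bar-type resolution by cochains on a free $\Gamma$-set. The main obstacle is precisely part (a) in the coefficient case: one genuinely needs $\widetilde Y$ contractible (hence $Y$ aspherical) to get acyclicity of $\Cb^\bullet(\widetilde Y,V)$, and one must check that the coning homotopy is bounded in operator norm uniformly, which is where the boundedness hypothesis on cochains is used; once exactness and the norm bound on the homotopy are in hand, relative injectivity and the Fundamental Lemma do the rest mechanically, and isometry follows because both resolutions are norm-minimal and the comparison maps go both ways.
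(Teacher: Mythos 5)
The paper does not actually prove this statement: it is recorded in Section \ref{sec:bc:prel} as classical background attributed to Gromov and Ivanov (cf.\ \cite{Ivanov,miolibro}), so there is no in-paper argument to compare yours against. Your outline is the standard homological-algebra proof of the mapping theorem: identify $\Cb^{\bullet}(Y,V)$ with $\Cb^{\bullet}(\widetilde{Y},V)^{\Gamma}$ via \eqref{equation:isom:iso0}, show that the augmented complex $V\to\Cb^{\bullet}(\widetilde{Y},V)$ is a strong resolution by relatively injective $\Gamma$-modules, and combine the Fundamental Lemma with norm non-increasing comparison maps in both directions to get isometry. The relative injectivity step, using freeness of the $\Gamma$-action on $\Sing_k(\widetilde{Y})$ and a set-theoretic fundamental domain, is correct, and in the aspherical case the contracting homotopy dual to coning along a contraction of $\widetilde{Y}$ does handle arbitrary Banach coefficients.

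The one genuine gap is in the trivial-coefficient case \eqref{equation:mapping:thm}. You propose to establish strongness of $\Cb^{\bullet}(\widetilde{Y},\mathbb{R})$ for a merely simply connected $\widetilde{Y}$ ``via the cone on the universal cover''. A cone operator of norm at most one exists only when $\widetilde{Y}$ is contractible; a simply connected space need not be, and then there is nothing to cone along. That $\Cb^{\bullet}(\widetilde{Y},\mathbb{R})$ is nevertheless a strong resolution of $\mathbb{R}$ is precisely the hard content of Ivanov's theorem: his proof constructs a bounded contracting homotopy through a tower of principal bundles whose structure groups are Eilenberg--MacLane spaces of abelian (hence amenable) groups, and averages over them; modern treatments replace this with other genuinely nontrivial arguments. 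Citing that result is perfectly acceptable here (the paper does exactly that), but as written your sketch suggests the cone mechanism covers both cases, which it does not. Relatedly, be careful to distinguish exactness of the augmented complex from strongness: you need a contracting homotopy that is bounded in each degree, not merely vanishing of the (bounded) cohomology of $\widetilde{Y}$ in positive degrees.
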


Let $\Gamma\curvearrowright (X,\mu)$ be an essentially free measure preserving action on a standard Borel probability space. Notice that, for any infinite countable group examples of such actions always exist. For instance, the action of $\Gamma$ on its \emph{Bernoulli shift} $B_\Gamma$ and, in the residually finite case, on its \emph{profinite completion} $\widehat{\Gamma}$ \cite{FLPS}. Consider a Banach $\Gamma$-module $V$ which is the dual of a separable Banach $\Gamma$-module $W$. 
Consider the set 
$$\Linfw(\Gamma^{k+1}\times X,V)$$ 
of classes of $V$-valued weak$^*$-measurable essentially bounded functions on $\Gamma^{k+1}\times X$
endowed with the essentially supremum norm and with the $\Gamma$-action defined on representatives as
$$(\gamma \cdot f) (\gamma_0,\ldots,\gamma_k,x)\coloneqq \gamma \cdot f(\gamma^{-1}\gamma_0,\ldots,\gamma^{-1}\gamma_k,\gamma^{-1}x)\,.$$
We point out that, in general, weak$^*$-measurability does not coincide with measurability with respect to the topology induced by the norm on $V$. 
The coboundary operator defined above extend to 
$$\delta^k:\Linfw(\Gamma^{k+1}\times X,V)\to \Linfw(\Gamma^{k+2}\times X,V)\,.$$
as follows: for any $F\in \Linfw(\Gamma^{k+1}\times X,V)$ and any representative $f$, we define $\delta^k F$ as the class of $$ \delta^k f(\gamma_0,\ldots,\gamma_{k+1},x)\coloneqq \sum\limits_{i=1}^{k+1} (-1)^i f(\gamma_0,\ldots,\widehat{\gamma_i},\ldots,\gamma_{k+1},x)\,.$$
\begin{defn}
  The \emph{measurable bounded cohomology} of the action groupoid $\Gamma\ltimes X$ with coefficients in $V$ is the cohomology of the complex
  $(\Linfw(\Gamma^{\bullet+1}\times X,V)^{\Gamma},\delta^{\bullet})$,
  namely $$\Hmb^k(\Gamma\ltimes X; V)\coloneqq \Hm^k(\Linfw(\Gamma^{\bullet+1}\times X,V)^{\Gamma},\delta^{\bullet})\,.$$
\end{defn}
As a consequence of the exponential isomorphism \cite[Corollary 2.3.3]{monod:libro}
$$\Linfw(\Gamma^{k+1}\times X,V)\cong \Linfw(\Gamma^{k+1},\Linfw(X,V))$$
we get a canonical isometric isomorphism
\begin{equation}\label{equation:exponential:isomorphism}
  \Hmb^k(\Gamma\ltimes X;V)\cong \Hb^k(\Gamma;\Linfw(X,V)) 
\end{equation}
for every $k\geq 0$ \cite[Theorem 1]{sarti:savini:groupoids}.

The following general duality principle \cite[Lemma 6.1]{miolibro} will be useful later on. 
\begin{lemma}\label{lemma:duality}
Let $(C,\partial,||\cdot||_1)$ be a normed chain complex with dual normed chain complex $(C',\delta,||\cdot||_{\infty})$. Fix $n\in \mathbb{N}$ and denote by $\langle\cdot,\cdot\rangle$ the Kronecker pairing between $\Hm_n(C)$ and $\Hb^n(C')$. Then for every $\alpha\in \Hm_n(C)$ we have
$$||\alpha||_1 = \max\left\{\langle\beta,\alpha\rangle\,|\,\beta \in \Hb^n(C')\,,\, ||\beta||_{\infty} \leq 1\right\}\,.$$
\end{lemma}

\subsection{Homological algebra.}\label{sec:homo}
Since we work in the category of normed $\Gamma$-modules, all the maps will be assumed to be \emph{$\mathbb{R}$-linear} and \emph{bounded}. 
For such a map $\varphi:V\to W$ we recall that 
$$||\varphi||=\sup\limits_{v\in V} \frac{||\varphi(v)||_W}{||v||_V}\,.$$

A map $\iota: E \to W$ between normed $\Gamma$-modules is \emph{strongly injective} if there
is a map $\varphi: W \to E$ with $||\varphi||\leq 1$ such that $\varphi\circ \iota = \Id_E$ (in particular, $\iota$ is
injective).

A normed $\Gamma$-module $V$ is \emph{relatively injective}
if
the following holds: whenever $E,W$ are normed $\Gamma$-modules, $\iota: E \to W$ is a strongly
injective $\Gamma$-map and $\psi: E\to V$ is a $\Gamma$-map, there exists a $\Gamma$-map $\beta: W \to V$ with $||\beta||\leq ||\psi||$ 
such that $\beta\circ \iota =\psi$, namely the following extension problem admits a solution
\begin{center}
  \begin{tikzcd}
E \arrow{rr}{\iota}\arrow[swap]{rd}{\psi} && W\arrow[bend right =20,swap]{ll}{\varphi}\arrow[dotted]{ld}{\beta}\\
& V\,.&
  \end{tikzcd}    
\end{center}

A \emph{complex} of normed $\Gamma$-modules is a cochain complex $(V^{\bullet},\delta^{\bullet})$ where each $V^{\bullet}$ is a normed $\Gamma$-module and each $\delta^{\bullet}$ is a $\Gamma$-map. 

Given a normed $\Gamma$-module $V$, a \emph{resolution} $(V^{\bullet},\delta^{\bullet},\epsilon)$ of $V$ is a complex $(V^{\bullet},\delta^{\bullet})$ where 
$V^n=0$ if $n<0$ together with an augmentation $\Gamma$-map $\epsilon: V\to V^0$ such that the complex 
$$0\longrightarrow V\overset{\epsilon}{\longrightarrow}
V^0\overset{\delta^0}{\longrightarrow}V^1\overset{\delta^1}{\longrightarrow} V^2\overset{\delta^2}{\longrightarrow}\cdots$$ is exact. 

\begin{es}
  The complex $(\Cb^\bullet(\Gamma,V),\delta^\bullet)$ is a resolution of $V$ with the augmentation map given by inclusion of coefficients, namely
  $$\varepsilon:V \to \Cb^0(\Gamma;V)\,,\quad \varepsilon(v)(\gamma)\coloneqq v $$
  for every $v\in V$ and $\gamma\in \Gamma$.
\end{es}

A resolution $(V^{\bullet},\delta^{\bullet},\epsilon)$ of a normed $\Gamma$-module $V$ is \emph{strong} if it admits a contracting homotopy, that is a family of maps 
$k^\bullet: V^\bullet\to V^{\bullet-1}$ and 
$k^0:V^0\to V$
such that 
$$\delta^{\bullet-1} \circ k^{\bullet} + k^{\bullet+1}  \circ \delta^{\bullet} = \Id_{V^\bullet}$$ if $\bullet\geq 0$
and $$k^0 \circ \epsilon= \Id_V\,.$$

\begin{lemma}[Fundamental Lemma of Homological Algebra for Bounded Cohomology]\label{lemma:fundamental}
Let $V$ be a normed $\Gamma$-module. Let 
$(V^{\bullet},\delta_V^{\bullet},\epsilon)$ be a strong resolution of $V$ by relatively injective normed $\Gamma$-modules.
Then we have canonical isomorphisms
$$\Hb^{k}(\Gamma;\mathbb{R})\cong \Hm^{k}((V^{\bullet},\delta_V^{\bullet},\epsilon)^{\Gamma})$$
for every $k\geq 0$. 

If moreover there exists a norm non-increasing chain map 
$(\Cb^\bullet(\Gamma,V),\delta^\bullet,\varepsilon)\to( V^{\bullet},\delta_V^{\bullet},\epsilon)$ extending the identity on $V$, then the above isomorphisms are also isometric. 
\end{lemma}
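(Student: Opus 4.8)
The plan is to follow the standard double-complex / homotopy argument that underlies every proof of this kind (as in Ivanov's or Monod's treatment), adapted to the bounded setting. The key input I would use is that $\Cb^\bullet(\Gamma,V)$ is itself, together with the obvious augmentation $\epsilon_0\colon V\to\Cb^0(\Gamma,V)$, a strong resolution of $V$ by relatively injective $\Gamma$-modules: relative injectivity of $\Cb^k(\Gamma,V)\cong\Cb^0(\Gamma,\Cb^{k-1}(\Gamma,V))$ is the classical fact that the modules of bounded functions $\{f\colon\Gamma\to W\}$ with the left translation action are relatively injective, the averaging/extension being built from evaluation at the group identity; strongness comes from the usual contracting homotopy $(h^k f)(\gamma_0,\dots,\gamma_{k-1})=f(e,\gamma_0,\dots,\gamma_{k-1})$, which is norm non-increasing. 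So both $(\Cb^\bullet(\Gamma,V),\delta^\bullet,\epsilon_0)$ and $(V^\bullet,\delta^\bullet,\epsilon)$ are strong resolutions of the same module $V$ by relatively injective modules.

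The core of the proof is the comparison lemma for such resolutions. First I would invoke it to produce a $\Gamma$-chain map $\alpha^\bullet\colon \Cb^\bullet(\Gamma,V)\to V^\bullet$ extending $\Id_V$: one constructs $\alpha^k$ inductively using relative injectivity of $V^k$, feeding in the strongness (contracting homotopy) of the source resolution to get the required strongly-injective maps at each stage; each $\alpha^k$ can be taken norm non-increasing because the defining extension problem for relative injectivity has a solution with $\|\beta\|\le\|\psi\|$. Conversely, using relative injectivity of $\Cb^k(\Gamma,V)$ and strongness of $(V^\bullet,\delta^\bullet,\epsilon)$ one builds a $\Gamma$-chain map $\beta^\bullet\colon V^\bullet\to\Cb^\bullet(\Gamma,V)$ extending $\Id_V$. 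Then one shows that any two $\Gamma$-chain maps between strong resolutions extending the same map on the augmentation module are $\Gamma$-equivariantly homotopic — again an inductive construction of the homotopy using relative injectivity of the target and the contracting homotopy of the source. Applying this twice, $\beta^\bullet\circ\alpha^\bullet$ and $\alpha^\bullet\circ\beta^\bullet$ are each $\Gamma$-homotopic to the identity, so passing to $\Gamma$-invariants the induced maps on cohomology $\Hm^k((V^\bullet)^\Gamma)$ and $\Hb^k(\Gamma,V)$ are mutually inverse. Composing with the definition $\Hb^k(\Gamma,V)=\Hm^k(\Cb^\bullet(\Gamma,V)^\Gamma,\delta^\bullet)$ — here specialized to $V=\mathbb{R}$ to get the first displayed isomorphism — yields the canonical isomorphism. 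Since none of the chain maps or homotopies affect whether the isomorphism is canonical (they are unique up to homotopy), canonicity is automatic.

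For the isometric refinement, suppose in addition we are given a norm non-increasing $\Gamma$-chain map $\varrho^\bullet\colon\Cb^\bullet(\Gamma,V)\to V^\bullet$ extending $\Id_V$. On the one hand $\varrho^\bullet$ induces a norm non-increasing map on the cohomology of $\Gamma$-invariants, hence $\|\varrho^k(\xi)\|\le\|\xi\|$ for every class; on the other hand $\varrho^\bullet$ is itself one of the chain maps realizing the canonical isomorphism (by the uniqueness-up-to-homotopy just established, any extension of $\Id_V$ induces the canonical map on cohomology), so the canonical isomorphism is norm non-increasing in this direction. For the reverse inequality one uses that the comparison map $\beta^\bullet\colon V^\bullet\to\Cb^\bullet(\Gamma,V)$ can also be chosen norm non-increasing (relative injectivity of $\Cb^k(\Gamma,V)$ again gives $\|\beta^k\|\le 1$), and $\beta^\bullet$ realizes the inverse of the canonical isomorphism on cohomology; hence that inverse is also norm non-increasing. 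A map and its inverse both being norm non-increasing forces both to be isometric.

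The main obstacle is not conceptual but bookkeeping: carefully setting up the inductive constructions of the chain maps $\alpha^\bullet$, $\beta^\bullet$ and the homotopies so that at each step the hypotheses of relative injectivity are genuinely met — in particular producing, at stage $k$, the correct strongly injective $\Gamma$-map into which one extends, out of the contracting homotopy of the source resolution — and tracking the norm bounds through every such extension to be sure nothing worse than norm non-increasing creeps in. I would also take care with the point that relative injectivity is invoked for $\Cb^\bullet(\Gamma,V)$ with possibly non-trivial (dual Banach) coefficients $V$; this is exactly the setting in which the classical argument still works, and it is what makes the aspherical case with coefficients in \eqref{equation:mapping:thm:coefficients} available, so I would simply cite the relevant statements from \cite{monod:libro,miolibro,Ivanov} rather than reprove relative injectivity of the standard bar-type modules.
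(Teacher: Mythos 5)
The paper itself offers no proof of this lemma: it is quoted as a standard result from \cite{Ivanov,monod:libro,miolibro}, so there is no in-text argument to compare yours against. Your treatment of the first assertion is the standard comparison-of-resolutions argument and is essentially correct: both $(\Cb^{\bullet}(\Gamma,V),\delta^{\bullet},\epsilon_0)$ and $(V^{\bullet},\delta^{\bullet},\epsilon)$ are strong resolutions of $V$ by relatively injective modules, comparison maps extending $\Id_V$ exist in both directions and are unique up to $\Gamma$-homotopy, and they induce mutually inverse canonical isomorphisms on the cohomology of the invariants. (Two small remarks: what you actually prove is $\Hb^{k}(\Gamma,V)\cong \Hm^{k}((V^{\bullet})^{\Gamma})$, which is what the paper later uses with $V=\Linf(X,\mathbb{R})$; the ``$\Hb^{k}(\Gamma,\mathbb{R})$'' in the statement is evidently a typo, and your ``specialize to $V=\mathbb{R}$'' does not parse, since the resolution resolves $V$. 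Also note that the paper's definition of \emph{strong} omits the condition $\|k^{\bullet}\|\leq 1$ on the contracting homotopy, which you will need below.)

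The isometric refinement contains a genuine gap. You assert that the comparison map $\beta^{\bullet}\colon V^{\bullet}\to \Cb^{\bullet}(\Gamma,V)$ ``can also be chosen norm non-increasing (relative injectivity of $\Cb^{k}(\Gamma,V)$ again gives $\|\beta^{k}\|\leq 1$)''. Relative injectivity only produces an extension with $\|\beta\|\leq \|\psi\|$, where $\psi$ is the map being extended; in the inductive construction of a chain map between resolutions, the map extended at stage $k$ is a factorization of $\delta^{k-1}\circ\beta^{k-1}$ precomposed with the contracting homotopy, so its norm is only bounded by $\|\delta^{k-1}\|\cdot\|\beta^{k-1}\|\cdot\|k^{k}\|\leq (k+1)\,\|\beta^{k-1}\|$. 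The comparison map obtained this way therefore has norms growing roughly factorially, and the reverse inequality (canonical seminorm on $\Hb^k(\Gamma,V)$ bounded above by the seminorm computed from $V^{\bullet}$) is not established. The correct ingredient is Ivanov's explicit construction: for \emph{any} strong resolution with contracting homotopy of norm at most one, there is a norm non-increasing chain map $V^{\bullet}\to\Cb^{\bullet}(\Gamma,V)$ extending $\Id_V$, defined degreewise by an explicit formula that composes the maps $k^{\bullet}$ with the isometric group action and exploits the concrete description of $\Cb^{k}(\Gamma,V)$ as bounded functions on $\Gamma^{k+1}$ --- not the relative injectivity of the target (see \cite[Theorem 4.16]{miolibro} and the corresponding statements in \cite{Ivanov,monod:libro}). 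With that in hand, your closing observation (a norm non-increasing isomorphism with norm non-increasing inverse is an isometry) finishes the proof. The same objection applies to your parenthetical claim that the forward map $\alpha^{\bullet}$ can be taken norm non-increasing by relative injectivity alone, but there it is harmless: the lemma's hypothesis supplies exactly such a map.
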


\section{Fundamental classes and simplicial volume}\label{sec:fund}
In this section we start from the classic notion of integral fundamental class for an oriented closed connected manifold and we construct some variations. Although some of them are classical and have already been studied \cite{LP,FLPS}, we introduce some novelties.

We fix, once and for all, the following
\begin{setup}\label{setup}
  Let $M$ be an oriented closed connected $n$-manifold and let $p:\widetilde{M}\to M$ be its universal cover. With a slight abuse of notation, we write $p$ also for the induced maps $\Sing_k(\widetilde{M})\to \Sing_k(M)$, where $\Sing_k(-)$ is the set of singular $k$-simplices of $-$. Denote by $\Gamma=\pi_1(M)$ and by $D$ a fundamental domain of $\Gamma\curvearrowright \widetilde{M}$. 
For any $k\geq 0$, the induced action $\Gamma\curvearrowright \Sing_k(\widetilde{M})$ inherits the fundamental domain $D_k$ obtained by taking simplices with first vertex in $D$. 
  Let $(X,\mu)$ be a standard Borel probability space and $\Gamma\curvearrowright (X,\mu)$ be an essentially free measure preserving action. 
\end{setup}

Let $$\Ck(\widetilde{M})\coloneqq \mathbb{Z}[\Sing_k(\widetilde{M})]$$ be the complex of integral singular chain on $\widetilde{M}$, endowed with the obvious $\Gamma$-action. We consider the following chain complexes, whose boundary homomorphisms are all defined as extensions of the singular boundary operator:
\begin{itemize}
  \item[(i)] the complex of \emph{integral singular chains} on $M$, that is 
  $$\Ck(M)\coloneqq\mathbb{Z}[\Sing_k(M)]\cong\Ck(\widetilde{M})\otimes_{\mathbb{Z}\Gamma} \mathbb{Z}\,,$$
  where $\Gamma$ acts trivially on $\mathbb{Z}$ and the isomorphism is canonical.
  \item[(ii)] the complex of \emph{real singular chains} on $M$
  $$\Ck(M;\mathbb{R})\coloneqq \mathbb{R}[\Sing_k(M)]   \cong\Ck(\widetilde{M})\otimes_{\mathbb{Z}\Gamma} \mathbb{R}\,,$$
  where $\Gamma$ acts trivially on $\mathbb{R}$ and the isomorphism is canonical.
  \item[(iii)] the complex of \emph{integral parametrized singular chains} on $M$
  $$\Ck(M;\Linf(X,\mathbb{Z}))\coloneqq \Ck(\widetilde{M})\otimes_{\mathbb{Z}\Gamma} \Linf(X,\mathbb{Z})\,,$$
  where $\Gamma$ acts on $\Linf(X,\mathbb{Z})$ as $\gamma\cdot F=[\gamma \cdot f]$ and
  $\gamma\cdot f(x)=f(\gamma^{-1}x)$ where $F\in \Linf(X,\mathbb{Z})$ and $f$ represents $F$.
  \item[(iv)] the complex of \emph{integral parametrized real singular chains} on $M$
  $$\Ck(M;\Linf(X,\mathbb{Z})\otimes_{\mathbb{Z}\Gamma} \mathbb{R})\coloneqq \Ck(\widetilde{M})\otimes_{\mathbb{Z}\Gamma} \Linf(X,\mathbb{Z})\otimes_{\mathbb{Z}\Gamma} \mathbb{R}\,,$$
  where the $\Gamma$-actions on $\mathbb{R}$ and $\Linf(X,\mathbb{Z})$ are as above. 
\end{itemize}

\begin{oss}
By definition, elements of the above tensor products are equivalence classes. A representative $\sum_{i} s_i\otimes -$ of a given class is said to be in \emph{reduced form} if $p(s_i)\neq p(s_j)$ whenever $i\neq j$ \cite[Definition 4.2]{LP}. One can easily show that each class admits such a representative. For this reason, from now on we will tacitly assume that a (representative of a) chain is in reduced form.
\end{oss}

Each of the above modules is in fact a \emph{normed $\Gamma$-module}. Precisely, they can be equipped with a norm as follows. 
\begin{itemize}
  \item[(i)] $$\left|\left|\sum\limits_{i=1}^m a_i s_i\right|\right|_1= \sum\limits_{i=1}^m |a_i|$$
  where $a_i\in \mathbb{Z}$ and $s_i\in \Ck(\widetilde{M})$ with $p(s_i)\neq p(s_j)$ if $i\neq j$. This is called the \emph{$\ell^1$-norm}.
  \item[(ii)] $$\left|\left|\sum\limits_{i=1}^m a_i s_i\right|\right|_1= \sum\limits_{i=1}^m |a_i|$$
  where $a_i\in \mathbb{R}$ and $s_i\in \Ck(\widetilde{M})$ with $p(s_i)\neq p(s_j)$ if $i\neq j$. This is again called \emph{$\ell^1$-norm}.
  \item[(iii)] $$\left\bracevert\sum\limits_{i=1}^m s_i\otimes [f_i]\right\bracevert^X= \sum\limits_{i=1}^m \int_X|f_i(x)|d\mu(x)$$
  where $[f_i]\in \Linf(X,\mathbb{Z})$ and $s_i\in \Ck(\widetilde{M})$ with $p(s_i)\neq p(s_j)$ if $i\neq j$.
  \item[(iv)] $$\left\bracevert\sum\limits_{i=1}^m s_i\otimes [f_i]\otimes a_i \right\bracevert_\mathbb{R}^X= \sum\limits_{i=1}^m\left( |a_i|\int_X|f_i(x)|d\mu(x)\right)$$
  where $a_i\in \mathbb{R},\;[f_i]\in \Linf(X,\mathbb{Z})$ and $s_i\in \Ck(\widetilde{M})$ with $p(s_i)\neq p(s_j)$ if $i\neq j$.
\end{itemize}

The boundary operator of $\Ck(\widetilde{M})$ induces boundary operator on each of the modules described above. Thus, we obtain complexes of normed $\Gamma$-modules, and each norm descends to a seminorm in homology, that we denote with the same symbols. Moreover, we have the following commutative square where all maps are induced by inclusions of coefficients
\begin{equation}\label{diagram}
  \begin{tikzcd}
     \Ck(M)\arrow{r}{i}\arrow{d}[swap]{j}&\Ck(M;\Linf(X,\mathbb{Z}))\arrow{d}{\ell}\\
    \Ck(M;\mathbb{R})\arrow{r}{k}&\Ck(M;\Linf(X,\mathbb{Z})\otimes_{\mathbb{Z}\Gamma} \mathbb{R})\,.
    \end{tikzcd}
\end{equation}
Since all the maps commutes with the boundary operators, the above diagram is made by chain maps and descends in homology to
\begin{center}
  \begin{tikzcd}
  \Hk(M)\arrow{r}{I}\arrow{d}[swap]{J}&\Hk(M;\Linf(X,\mathbb{Z}))\arrow{d}{L}\\
  \Hk(M;\mathbb{R})\arrow{r}{K}&\Hk(M;\Linf(X,\mathbb{Z})\otimes_{\mathbb{Z}\Gamma} \mathbb{R})
  \end{tikzcd}
\end{center}

Furthermore, all the inclusions $\mathbb{Z}\hookrightarrow \mathbb{R}$, $\mathbb{Z}\hookrightarrow \Linf(X,\mathbb{Z})$ and 
$\mathbb{R}\hookrightarrow\Linf(X,\mathbb{Z})\otimes_{\mathbb{Z}\Gamma} \mathbb{R}$ do not increase the norm (as morphisms of normed $\mathbb{Z}\Gamma$-modules). Thus the maps $I,J,K,L$ are norm non-increasing.

Some variations of the notion of \emph{fundamental class} can be defined starting from the integral fundamental class.
\begin{defn}\label{def:fund:classes}
  Denote by $[M]\in \Hn(M)$ the (integral) fundamental class of $M$.

The \emph{real fundamental class} of $M$ is $$[M]_{\mathbb{R}}\coloneqq J([M])\in \Hn(M;\mathbb{R})\,.$$ A \emph{real fundamental cycle} of $M$ is any representative of $[M]_{\mathbb{R}}$ in $\Cn(M;\mathbb{R})$. 

The \emph{$X$-fundamental class} of $M$ is $$[M]^X_{\mathbb{Z}}\coloneqq I([M])\in \Hn(M;\Linf(X, \mathbb{Z}))\,.$$ A \emph{$X$-fundamental cycle} of $M$ is a representative of $[M]^X_{\mathbb{Z}}$ in $\Cn(M;\Linf(X,\mathbb{Z}))$.

The \emph{real $X$-fundamental class} of $M$ is $$[M]^X_{\mathbb{R}}\coloneqq K([M]_{\mathbb{R}})=L([M]^X_{\mathbb{Z}})\in \Hn(M;\Linf(X,\mathbb{Z})\otimes_{\mathbb{Z}\Gamma}\mathbb{R})\,.$$ A \emph{real $X$-fundamental cycle} of $M$ is a representative of $[M]^X_{\mathbb{R}}$ in $\Cn(M;\Linf(X,\mathbb{Z})\otimes_{\mathbb{Z}\Gamma} \mathbb{R})$.
\end{defn}

The following inequalities hold since all the maps in the above diagrams do not increase the norm
\begin{equation}\label{equation:inequalities}
\left\bracevert[M]^X_{\mathbb{R}}\right\bracevert_{\mathbb{R}}^X\leq \left|\left|[M]_{\mathbb{R}}\right|\right|_1\leq \left|\left|[M]\right|\right|_1\,,\;\;\;
\left\bracevert[M]^X_{\mathbb{R}}\right\bracevert^X_{\mathbb{R}}\leq \left\bracevert[M]_{\mathbb{Z}}^X\right\bracevert^X\leq\left|\left|[M]\right|\right|_1 \,.
\end{equation}

Our next goal is to investigate further the relation between the (semi)norms introduced above. In particular, the $\ell^1$-norm of the real fundamental class reflects interesting features of the manifold itself. This led Gromov \cite{vbc} to introduce the following 
\begin{defn}[Gromov]\label{def:simp:vol}
  The \emph{simplicial volume} of $M$ is 
  $$||M||\coloneqq \left|\left|[M]_{\mathbb{R}}\right|\right|_1\,.$$
\end{defn}

Among the inequalities of Equation \eqref{equation:inequalities},
it is not difficult to prove that one is in fact an equality. As a consequence, we obtain the following
\begin{lemma}\label{lemma:simplicial:volume}
  Retain the Setup \ref{setup}. Then
  $$||M||=\left\bracevert[M]^X_{\mathbb{R}}\right\bracevert^X_{\mathbb{R}}\,.$$
\end{lemma}
\begin{proof}
We first show that $K$ admits a left inverse of norm at most one in every degree. Fix $k\geq 0$.
Recall that $K:\Hk(M;\mathbb{R})\to \Hk(M;\Linf(X,\mathbb{Z})\otimes_{\mathbb{Z}\Gamma} \mathbb{R})$ does not increase norm. Furthermore, the integration map $\Ck(M;\Linf(X,\mathbb{Z})\otimes_{\mathbb{Z}\Gamma} \mathbb{R})\to \Ck(M;\mathbb{R})$ (compare also \cite[Remark 4.7]{LP}) defined as 
$$\sum\limits_{i=1}^m s_i\otimes [f_i]\otimes a_i \mapsto \sum\limits_{i=1}^m s_i \otimes a_i \int_X f_i(x) d\mu(x)$$ sends real $X$-fundamental cycles to real fundamental cycles and does not increase the norm. Hence it descends in homology to a left inverse for $K$ of norm at most one.
The conclusion now follows by taking $k=n$ and by the inequality
$$\left\bracevert[M]^X_{\mathbb{R}}\right\bracevert_{\mathbb{R}}^X\leq \left|\left|[M]\right|\right|_1$$
of Equation \eqref{equation:inequalities}. 
\end{proof}

For more details about other interactions between the norms defined above we refer to the work by L\"oh and Pagliantini \cite{LP}.

\section{$\mathcal{R}$-simplicial complexes, $\mathcal{R}$-spaces, foliated simplices and homology}\label{section:R-category}
We briefly recall the basics about $\mathcal{R}$-simplicial complexes and $\mathcal{R}$-spaces. For all the details we refer to Sauer's paper \cite{sauer}. Some material can be also found in the seminal work by Gaboriau \cite{Gaboriau2002} and in Schmidt's PhD thesis \cite{schmidt}.

Throughout, $(X,\mu)$ will be a standard probability spaces and $\mathcal{R}$ will be the orbit equivalence relation associated to an essentially free probability measure preserving action of a countable group $\Gamma$ on $(X,\mu)$, that is 
$$\mathcal{R}\coloneqq \mathcal{R}_{\Gamma\curvearrowright(X,\mu)}=\{(\gamma x, x)\,,\, x\in X\,,\, \gamma\in \Gamma\}\,.$$

\subsection{$X$-spaces and $\mathcal{R}$-simplicial complexes}\label{sec:X:spaces}
A \emph{$X$-space} is a standard Borel space $\Sigma$ equipped with a countable-to-one Borel projection $\pi:\Sigma \to X$. 
Since the projection is sometimes not relevant, we omit it in the notation. We denote by $\Sigma_x$ the fiber of $\pi$ over $x$.

A \emph{$X$-map} between $X$-spaces $\Sigma$ and $\Omega$ is a Borel map $\phi: \Sigma\to \Omega$ fitting in the commutative diagram 
\begin{center}
  \begin{tikzcd}
    \Sigma\arrow{rr}{\phi}\arrow{rd}&&\Omega\arrow{dl}\\
    &X\,.&
  \end{tikzcd}
\end{center}

Given $X$-spaces $\pi:\Sigma\to X$ and $\pi':\Omega\to X$, their \emph{$X$-product} is the fiber product with respect to the projections, namely the $X$-space
$$\Sigma*\Omega=\Sigma{}_{\pi}\times_{\pi'}\Omega=\{(y,z)\in \Sigma\times \Omega\,,\, \pi(y)=\pi'(z)\}\,.$$

Given a $X$-space $\Sigma$, the \emph{natural measure} is the measure $\nu$ defined on a Borel subset $U\subset \Sigma$ as 
\begin{equation}\label{equation:natural:measure}
  \nu(U)=\int_X |U\cap \pi^{-1}(x)| d\mu(x)\,.
\end{equation}

\begin{es}
The Borel equivalence relation $\mathcal{R}$ is an $X$-space. More generally, the set of morphisms $\mathcal{G}^{(1)}$ of any discrete measured groupoid $\mathcal{G}$ with unit space $\mathcal{G}^{(0)}$ is a $\mathcal{G}^{(0)}$-space, and the natural measure $\nu$ coincide with the symmetric quasi-invariant measure on $\mathcal{G}^{(1)}$ \cite{sarti:savini:groupoids,HS}.
\end{es}

An \emph{$\mathcal{R}$-action} on a $X$-space $\Sigma$ is a Borel map 
$$\mathcal{R}* \Sigma\to \Sigma\,,\;\;\; ((y,x),u)\mapsto (y,x)u$$ satisfying the following properties
\begin{itemize}
  \item[(i)] $(y,x) \Sigma_x\subset \Sigma_y$ for all $(y,x)\in \mathcal{R}$.
  \item[(ii)] $(x,x) u=u$ for all $x\in X$ and $u\in \Sigma_x$. 
  \item[(iii)] $(z,y)(y,x) u=(z,x)u$ for all $(z,y),(y,x)\in \mathcal{R}$ and $u\in \Sigma_x$.
\end{itemize}

A \emph{Borel fundamental domain $\mathcal{D}$} for a $\mathcal{R}$-action on $\Sigma$ is a Borel subset of $\Sigma$ that intersects each $\mathcal{R}$-orbit exactly once. 
Given a $\mathcal{R}$-action on $\Sigma$ with Borel fundamental domain $\mathcal{D}$, the measure $\nu|_{\mathcal{D}}$ on $\mathcal{D}$ induces a measure on the space of classes $\mathcal{R}\backslash \Sigma$ equipped with the quotient Borel structure, called \emph{transverse measure}.
 Throughout, we assume that \emph{all} $\mathcal{R}$-actions admit a Borel fundamental domain.

An \emph{$\mathcal{R}$-simplicial complex} is the data of: 
\begin{itemize}
  \item a $X$-space $\Sigma^{(0)}$ equipped with an $\mathcal{R}$-action. 
  \item for each $n\in \mathbb{N}$ a Borel subset $\Sigma^{(n)}\subset \Sigma^{(0)}*\cdots *\Sigma^{(0)}$ of the $(n+1)$-fiber product of $\Sigma^{(0)}$ such that
  \begin{itemize}
    \item $\Sigma^{(n)}$ is invariant under permutations of the coordinates.
\item $(v_0,\cdots ,v_n)\in \Sigma^{(n)}$ implies $v_0 = v_n$.
\item $(v_0,\cdots ,v_n)\in \Sigma^{(n)}$ implies $(v_1,\cdots ,v_n)\in \Sigma^{(n-1)}$.
\item $\mathcal{R}\Sigma^{(n)}=\Sigma^{(n)}$, where the $\mathcal{R}$-action is the obvious one induced by $\mathcal{R}\curvearrowright \Sigma^{(0)}$.
  \end{itemize}
\end{itemize}

Fix a $\mathcal{R}$-simplicial complex $\Sigma$ and a Borel bijection between 
$\mathcal{R}$ and $X\times I$, where $I$ is a countable set. 
%
Moreover, fix a fundamental domain $\mathcal{D}$ for the $\mathcal{R}$-action on $\Sigma^{(0)}$. 
Thanks to the Borel selector theorem \cite{sauer05} we can find a countable partition $\mathcal{D}=\bigsqcup\limits_{n\in \mathbb{N}} \mathcal{D}_n$ of Borel subsets, each of which injects to $X$ via $p_{\Sigma^{(0)}}$. 

Denote by $\Delta(I\times \mathbb{N})$ the $I$-\emph{complete simplicial complex} whose set of vertices is $I$ and having one simplex for each finite collection of vertices. 
Consider now the following composition
\begin{equation}\label{equation_standard_embedding}
\Sigma^{(0)}=\mathcal{R}\mathcal{D}\rightarrow \mathcal{R} \times\mathbb{N}
\rightarrow X\times I\times \mathbb{N}\cong X\times \Delta(I\times \mathbb{N})^{(0)}\,,
\end{equation}
where the first arrow is the assignement $(y,x) \mathcal{D}_n\mapsto ((y,x),n)$ and the second one is induced by the bijection 
$\mathcal{R} \leftrightarrow X\times I$.
Then the map of Equation \eqref{equation_standard_embedding} extends to each $\Sigma^{(n)}$ and the resulting embedding $\Sigma\rightarrow X\times \Delta(I\times \mathbb{N})$ is called a \emph{standard embedding} of $\Sigma$. 

\begin{oss}
  Standard embeddings are neither unique nor canonical in any reasonable sense. Throughout, we will often fix one such map and work with it. As we will see, all our constructions will never depend on this choice. 
\end{oss}

\subsection{$\mathcal{R}$-spaces}\label{sec:R:spaces}
If $\Sigma$ is an $\mathcal{R}$-simplicial complex, we denote by $|\Sigma|$ the disjoint union of the geometric realizations its fibers $\Sigma_x$, that is 
$$|\Sigma|\coloneqq \bigsqcup\limits_{x\in X} |\Sigma_x|\,.$$

A \emph{$\mathcal{R}$-space} is the geometric realization of a $\mathcal{R}$-simplicial complex whose fibers are locally finite for $\mu$-almost every $x\in X$.

A standard embedding induces an embedding 
$$|\Sigma|\rightarrow X\times |\Delta(I\times \mathbb{N})|\,,$$ 
so that $\Sigma$ has a natural standard Borel structure. Here $|\Delta(I\times \mathbb{N})|$ is the geometric realization of the $I$-complete simplicial complex.
Moreover, the $\mathcal{R}$-action on $\Sigma$ descends to a Borel action on $|\Sigma|$.

Let $\mathcal{D}$ be a fundamental domain for the $\mathcal{R}$-action on $\Sigma^{(0)}$.
Then we have an induced action on $\Sigma^{(n)}$ for every $n$.
Given a $n$-simplex in $\Sigma$, it corresponds to a subspace of $|\Sigma|$ which we call a \emph{geometric $n$-simplex}.
The union of all geometric $n$-simplices coming from simplices lying in a fundamental domain of the $\mathcal{R}$-action on $\Sigma^{(n)}$
realizes a Borel fundamental domain for the $\mathcal{R}$-action on $|\Sigma^{(n)}|$. 
The union of such Borel fundamental domains for every $n$ define a Borel fundamental domain for the $\mathcal{R}$-action on $|\Sigma|$.

By definition of Borel fundamental domain and since $\mathcal{R}$ acts trivially on each fiber, the restriction $p:|\Sigma_x|\rightarrow \mathcal{R}\backslash |\Sigma|$ is injective for every $x\in X$.
Thus the image $p(|\Sigma_x|)$ in $\mathcal{R}\backslash |\Sigma|$ can be identified with $|\Sigma_x|$ and is called the \emph{leaf} of $\Sigma$ at $x$. We will denote it as $\mathcal{L}_{\Sigma}(x)$.

Let $\Sigma$ and $\Omega$ be $\mathcal{R}$-simplicial complexes and fix standard embeddings $\Sigma\rightarrow X\times \Delta(I\times \mathbb{N})$ and $\Omega\rightarrow X\times \Delta(I\times \mathbb{N})$. A Borel $X$-map $\phi:|\Sigma|\to |\Omega|$ is of \emph{countable variance} if for any set $A\times K$ with  $A\subset X$
Borel and $K \subset |\Delta(I\times\mathbb{N})|$ compact, there is a countable Borel partition $A=\bigsqcup_
{n\in \mathbb{N}} A_n$
such that $A_n \times  K \subset |\Sigma|$ and each restriction $\phi|_{A_n \times K}$ is a product. 

A \emph{geometric map} between $\mathcal{R}$-spaces is a map of countable variance which is continuous on $\mu$-almost every fiber and proper on $\mu$-almost every fiber.

\begin{es}\label{example:R:spaces}
  let $M$ be a triangulated manifold with universal cover $\widetilde{M}$, $\Gamma=\pi_1(M)$ and $\Gamma\curvearrowright(X,\mu)$ an essentially free probability measure preserving action. Then $\widetilde{M}\times X$ is an $\mathcal{R}$-space for $\mathcal{R}=\mathcal{R}_{\Gamma\curvearrowright (X,\mu)}$
  
  Let $\mathcal{U}$ be an $\mathcal{R}$-cover of $\widetilde{M}\times X$ in the sense of \cite[Definition 2.27]{sauer}. Then the \emph{nerve} $\mathcal{N}(\mathcal{U})$ is an $\mathcal{R}$-simplicial complex and the \emph{nerve map} $\widetilde{M}\times X\to |\mathcal{N}(\mathcal{U})|$ is a geometric map \cite[Lemma 2.35]{sauer}. 
\end{es}

\subsection{Foliated singular simplices and homology}\label{sec:foliated}
In this section we introduce simplices on $\mathcal{R}$-spaces. We warn the reader that the definition of singular foliated simplices and all the constructions up to Section \ref{sec:funct} could be done in the more general setting of $\mathcal{R}$-spaces in the sense of Schmidt \cite{schmidt}. An $\mathcal{R}$-space for Schmidt is a $X$-space that carries a compatible $\sigma$-compact, second countable Hausdorff topology. However, functoriality of our homology theory holds only for geometric maps, and the latter are defined through the underlying simplicial structure. For this reason, we prefer to follow faithfully Sauer and to work with $\mathcal{R}$-spaces as defined in the previus Section. 

Let $\Sigma$ be an $\mathcal{R}$-space. Fix $k\in \mathbb{N}$. We consider the bundle of singular $k$-simplices
$$\Sing_k(\Sigma)\coloneqq \bigsqcup_{x\in X} \Sing_k(\Sigma_x)$$
where $\Sing_k(\Sigma_x)$ is the set of singular $k$-simplices of $\Sigma_x$. The bundle comes with a natural projection onto $X$ and with a $\mathcal{R}$-action. Moreover, an $\mathcal{R}$-fundamental domain $\mathcal{D}\subset \Sigma$ induces a fundamental domain of such action. 
The choice of a standard embedding $\Sigma\rightarrow X\times \Delta(I\times \mathbb{N})$ descend to a map $\Sing_n(\Sigma)\rightarrow X\times \Sing_n(\Delta(I\times \mathbb{N}))$. 

\begin{defn}
  A subset $W\subset \Sing_k(\Sigma)$ is \emph{admissible} if its image under the standard embedding is contained in a product $C\times U$ with $C\subset \Sing_k(\Delta(I\times \mathbb{N}))$ is countable and $U\subset X$ is Borel. 

  A subset $W\subset \mathcal{R}\backslash\Sing_k(\Sigma)$ is \emph{admissible} if its preimage in $\Sing_k(\Sigma)$ is admissible. 
\end{defn}

The above definition does not depend on the choice of the standard embedding \cite[Lemma 3.5]{sauer}.

We consider the $\sigma$-algebra on $\Sing_k(\Sigma)$ and on $\mathcal{R}\backslash \Sing_k(\Sigma)$ generated by admissible subsets \cite[Remark 3.3]{sauer}.
Admissible subsets of $ \Sing_k(\Sigma)$ are $X$-spaces, whence they comes with a natural measure $\nu$ (Equation \eqref{equation:natural:measure}). The latter descends to a measure on admissible subsets of $\mathcal{R}\backslash \Sing_k(\Sigma)$, called transverse measure.

\begin{es}\label{example:decomposition}
Let $\Sigma=\widetilde{M}\times X$ as in Example \ref{example:R:spaces} and retain the notation fixed in Setup \ref{setup}. Given an admissible $U\subset \Sing_k(\widetilde{M})\times X$ 
we can write 
$$\nu(U)
=\int_X |U\cap (\Sing_k(\widetilde{M})\times \{x\})|d\mu(x)\,.$$
Moreover, the natural measure $\overline{\nu}$ on an admissible subset $W\subset  \Gamma\backslash(\Sing_k(\widetilde{M})\times X)$ can be defined via the identification of $\Gamma\backslash(\Sing_k(\widetilde{M})\times X)$ with the Borel fundamental domain $\mathcal{D}_k=D_k\times X$ of $\Gamma\curvearrowright \Sing_k(\widetilde{M})\times X$.
Therefore, we have the following explicit formula
$$\overline{\nu}(W)\coloneqq \int_X\sum\limits_{s\in D_k} |\pi^{-1}(W)\cap \{(s,x)\}|d\mu(x)\,.$$
It is easy to check that $\overline{\nu}$ does not depend on the choice of the fundamental domain $\mathcal{D}_k$.

Let $\mathcal{D}_k$ be any fundamental domain of $\Gamma\curvearrowright \Sing_{k}(\widetilde{M})\times X$. Then $\nu$ decomposes as  
  \begin{equation}\label{equation:decomposition:natural:measure}
  \nu=\sum\limits_{\gamma\in\Gamma}\gamma_*\nu|_{\mathcal{D}_k}\,,
  \end{equation}
  where $\gamma_*\nu(U)=\nu(\gamma^{-1} U)$ for every admissible 
$U\subset \Sing_{k}(\widetilde{M})\times X$. 
\end{es}

\begin{defn}[singular foliated simplex, \cite{sauer}]\label{definition:singular:simplex}
  A countable-to-one map $\sigma:  A\to \mathcal{R}\backslash \Sing_k(\Sigma)$ where $A\subset \mathbb{R}$ is a subset of finite Lebesgue measure $m_{\mathcal{L}}(A)<+\infty$ is a \emph{singular foliated $k$-simplex} if
  \begin{itemize}
    \item[(i)] $\Ima(\sigma)$ is admissible and $\sigma:A\to \Ima(\sigma)$ is a $\Ima(\sigma)$-space.
    \item[(ii)] $$m_{\mathcal{L}}(A)=\int_{\Ima(\sigma)} |\sigma^{-1}(s)| d\overline{\nu}(s)\,.$$
    \item[(iii)] for $\mu$-almost every $x\in X$ the set $\Ima(\sigma)\cap \Sing_k(\mathcal{L}_\Sigma(x))$ is locally finite in $\mathcal{L}_\Sigma(x)$. 
  \end{itemize}

  The \emph{norm} (or \emph{weight}) of $\sigma$ is $|\sigma|\coloneqq m_{\mathcal{L}}(A)$. 
\end{defn}

We consider the set of singular foliated $k$-simplices $S_k(\mathcal{R}\backslash\Sigma)$ and the free $\mathbb{R}$-module generated by $S_k(\mathcal{R}\backslash\Sigma)$ 
$$\mathcal{C}_k(\mathcal{R}\backslash \Sigma;\mathbb{R})\coloneqq \mathbb{R}[S_k(\mathcal{R}\backslash\Sigma)]\,.$$
Since we want to consider objects up to null sets, we need to identify simplices (and chains) that ``coincide almost everywhere". To this end, we follow again Sauer. Given $\sigma\in S_k(\mathcal{R}\backslash\Sigma)$ we define its \emph{multiplicity function} as
$$\omega_{\sigma}:\mathcal{R}\backslash\Sing_k(\Sigma) \to \mathbb{Z}\,,\;\;\; s\mapsto |\sigma^{-1}(s)|\,.$$ Then, by linearity, we can define $\omega_\rho$ for every $\rho\in\mathcal{C}_k(\mathcal{R}\backslash \Sigma;\mathbb{R})$. Given a chain $\rho\in \mathcal{C}_k(\mathcal{R}\backslash \Sigma;\mathbb{R})$, by condition (iii) of Definition  \ref{definition:singular:simplex}, the formal sum $$\rho(x)\coloneqq \sum\limits_{s\in \Ima(\rho)} \omega_{\rho}(s) s$$ defines a locally finite singular chain in $\Ck^{\text{lf}}(\mathcal{L}_\Sigma(x);\mathbb{R})$ for almost every $x\in X$. The face operators of the $\Sigma_x$'s induce face operators  $d_k^i:\Sing_k(\Sigma)\to \Sing_{k-1}(\Sigma)$. Moreover, because the latter are compatible with the $\mathcal{R}$-action, we obtain boundary operators $\partial_k:\mathcal{C}_k(\mathcal{R}\backslash \Sigma;\mathbb{R})\to \mathcal{C}_{k-1}(\mathcal{R}\backslash \Sigma;\mathbb{R})$ defined as $\partial_k\coloneqq \sum\limits_{i=0}^k (-1)^i \partial_k^i$, where $\partial_k^i\sigma$ is given by post-composing $\sigma$ with $d_k^i$. In particular if $\sigma\in S_k(\mathcal{R\backslash}\Sigma)$ then 
\begin{equation}\label{eq:boundary}
  \partial_{k}\sigma=\sum\limits_{i=0}^{k}  (-1)^{i} \partial^i_{k}\sigma\,.
\end{equation}
Moreover, if $d^x_k:\Ck^{\text{lf}}(\mathcal{L}_\Sigma(x);\mathbb{R})\to\Cm_{k-1}^{\text{lf}}(\mathcal{L}_\Sigma(x);\mathbb{R}) $ is the boundary operator on the leaf at $x$, the following diagram commutes
\begin{center}
  \begin{tikzcd}
    \mathcal{C}_k(\mathcal{R}\backslash \Sigma;\mathbb{R}) \arrow{d}[swap]{\partial_k}\arrow{r}{\rho\mapsto \rho(x)}& \Ck^{\text{lf}}(\mathcal{L}_\Sigma(x);\mathbb{R})\arrow{d}{d_k^x}\\
    \mathcal{C}_{k-1}(\mathcal{R}\backslash \Sigma;\mathbb{R})\arrow{r}{\rho\mapsto \rho(x)}& \textup{C}_{k-1}^{\text{lf}}(\mathcal{L}_\Sigma(x);\mathbb{R})
  \end{tikzcd}
\end{center}
for every $x\in X$ and in any degree. Hence the subsets
$$\mathcal{C}^{\text{a.e.}}_k(\mathcal{R}\backslash \Sigma;\mathbb{R})\coloneqq \left\{\rho \in \mathcal{C}_k(\mathcal{R}\backslash \Sigma;\mathbb{R})\,,\, \rho(x)=0 \; \text{for a.e. } x\in X\right\}$$
form a well-defined subcomplex of $(\mathcal{C}_k(\mathcal{R}\backslash \Sigma;\mathbb{R}),\partial_k)$.
\begin{defn}\label{definition:homology}
  The \emph{real singular foliated homology} of $\Sigma$ is the homology of the complex $\Ck(\mathcal{R}\backslash \Sigma;\mathbb{R})\coloneqq \mathcal{C}_k(\mathcal{R}\backslash \Sigma;\mathbb{R})/\mathcal{C}^{\text{a.e.}}_k(\mathcal{R}\backslash \Sigma;\mathbb{R})$, that is 
$$\Hm_k(\mathcal{R}\backslash \Sigma;\mathbb{R})\coloneqq \Hm_k(\text{C}_{\bullet}(\mathcal{R}\backslash \Sigma;\mathbb{R}),\partial_\bullet)\,.$$
\end{defn}

The complex $(\text{C}_{\bullet}(\mathcal{R}\backslash \Sigma;\mathbb{R}),\partial_\bullet)$ becomes a normed chain complex when endowed with the norm obtained by extending the one defined on a simplex $\sigma\in S_k(\mathcal{R}\backslash\Sigma)$ as
\begin{equation}\label{equation:norm}
  |\sigma|\coloneqq \int\limits_{\Ima(\sigma)} |\omega_{\sigma}|=\overline{\nu}(|\omega_{\sigma}|)\,.
\end{equation}
Indeed, $\mathcal{C}^{\text{a.e.}}_k(\mathcal{R}\backslash \Sigma;\mathbb{R})$ is precisely the subspace of zero-norm chains. 
Such norm descends in homology to a seminorm, still denoted by $|\cdot |$.

\begin{oss}\label{remark:diff}
A notion of singular simplices for $\mathcal{R}$-spaces was introduced also by Schmidt in his PhD thesis \cite{schmidt}.
A singular $k$-simplex on a $\mathcal{R}$-space $\Sigma$ in the sense of Schmidt is a $X$-map 
\begin{equation}\label{eq:schmidt}
  \widetilde{\sigma}:X_{\widetilde{\sigma}}\to \Sing_k(\Sigma) 
\end{equation}
where $X_{\widetilde{\sigma}}\subset X$ is Borel.
 One may think that the composition of $\widetilde{\sigma}$ with the projection $p:\Sing_k(\Sigma) \to \mathcal{R}\backslash\Sing_k(\Sigma)$ could give a foliated singular simplex as in Definition \ref{definition:singular:simplex}. However, a priori $\sigma=p\circ\widetilde{\sigma}$ could violate both condition (i) and (iii): indeed, $\Ima(\sigma)$ may not be admissible and it may induce non-locally finite chains on almost every leaf. Explicit examples of both phenomena can be easily constructed. For instance, take $\Sigma=\widetilde{M}\times X$ the $X$-space equipped with the projection $\pi_2$ on the second component. If $\pi_1$ is the projection on the first factor and $\widetilde{\sigma}:X_{\widetilde{\sigma}}\mapsto\Sing_k( \widetilde{M})\times X$ is such that the set $\{\pi_1\circ \widetilde{\sigma}(t)\,,\, t\in X_{\widetilde{\sigma}}\}\subset \Sing_k(\widetilde{M})$ is uncountable, then $\sigma=p\circ\widetilde{\sigma}$ does not satisfies condition (i).
In general, we do not know whether any simplex as in Definition \ref{definition:singular:simplex} can be obtained as the projection of a map as in \eqref{eq:schmidt}. In any case, no one-to-one correspondence holds.

Another difference with \cite{schmidt} lies in the choice of coefficients. In fact, Schmidt's homology incorporates the dynamical data already at the level of the coefficients: his chains of simplices have coefficients in $\Linf(X,\Lambda)$, where $\Lambda$ is a subring of $\mathbb{C}$ (tipically $\mathbb{Z}, \mathbb{R}$ or $\mathbb{C}$).
In contrast, our approach only considers real chains; the dynamical data will appear exclusively through the simplices themselves, rather than in the coefficients.
When $\Lambda=\mathbb{C}$ Schmidt proves that the homology of $\widetilde{M}\times X$ is isomorphic to the singular homology of $M$ with coefficients in $\Linf(X,\mathbb{C})$. We do not expect a similar result in our framework.

In conclusion, although we do not a priori exclude a substantial overlap between Schmidt's theory and ours, the two approaches differ in several fundamental aspects, reflecting the distinct purposes for which they were developed.
\end{oss}

\subsection{Functoriality}\label{sec:funct}
We prove that foliated singular homology is a functor from the category of $\mathcal{R}$-spaces and geometric maps to the category of normed vector spaces. Since the results below are straight consequences of Sauer's work, we include only the proof of novelties and we refer to \cite{sauer} for all the details.

\begin{prop}[Sauer]
  Let $\Sigma,\Omega$ be $\mathcal{R}$-spaces and $\phi:\Sigma\to \Omega$ be a geometric map. Then $\phi$ descends to maps $\Cm_k(\phi):\Cm_k(\mathcal{R}\backslash \Sigma;\mathbb{R})\to \Cm_k(\mathcal{R}\backslash \Omega;\mathbb{R})$ and $\Hm_k(\phi):\Hm_k(\mathcal{R}\backslash \Sigma;\mathbb{R})\to \Hm_k(\mathcal{R}\backslash \Omega;\mathbb{R})$.
\end{prop}
\begin{proof}
  The proof is identical to the one of \cite[Theorem 3.12]{sauer}. 

\end{proof}

\begin{lemma}
  Let $\Sigma,\Omega$ be $\mathcal{R}$-spaces and $\phi:\Sigma\to \Omega$ be a geometric map. Then $\Cm_k(\phi)$ and $\Hm_k(\phi)$ do not increase the norm.
\end{lemma}
\begin{proof}
Consider a foliated $k$-simplex $\sigma: A\to \mathcal{R}\backslash \Sing_k(\Sigma)$. By definition, $\left|\Cm_k(\phi)(\sigma)\right|=m_{\mathcal{L}}(A)=|\sigma|$. By linearity $\left|\sum\limits_{i=1}^k \Cm_k(\phi)(\sigma_i)\right|=\sum\limits_{i=1}^k|a_i|\left|\Cm_k(\phi)(\sigma_i)\right|=\sum\limits_{i=1}^k|a_i||\sigma_i|$, thus  $\Cm_k(\phi)$ is isometric. 
The second part follows since the induced map in homology of any norm non-increasing map has norm at most one.  
\end{proof}

\section{Foliated homology and simplicial volume}\label{section:foliated}
The goal of this section is to focus on the construction of Section \ref{section:R-category} in the specific framework of Setup \ref{setup}, namely when $\Sigma=\widetilde{M}\times X$.
We will give the definition of \emph{foliated fundamental class} and study its norm, comparing it with the simplicial volume (Theorem \ref{theorem:simplicial:volume}).

Retain the setting of Setup \ref{setup}.
First of all, we adapt a construction by Sauer \cite{sauer} to define a chain map 
$ \Ck(M;\Linf(X,\mathbb{Z})\otimes_{\mathbb{Z}\Gamma} \mathbb{R})\to \Ck(\Gamma \backslash(\widetilde{M}\times X); \mathbb{R})\,.$

\begin{lemma}[Sauer]\label{lemma:lambda}
  The map 
  $$\lambda_k: \Ck(M;\Linf(X,\mathbb{Z})\otimes_{\mathbb{Z}\Gamma} \mathbb{R})\to \Ck(\Gamma \backslash(\widetilde{M}\times X); \mathbb{R})\,$$
uniquely determined by 
\begin{gather*}
 s\otimes [\chi_A]\mapsto \left( j^{-1}(A)\to \Gamma\backslash(\Sing_k(\widetilde{M})\times X)\,,\;t\mapsto [s,j(t)] \right)
\end{gather*} 
where $j:[0,1]\to X$ is a measure preserving isomorphism, $A\subset X$ is a Borel subset and $s_i\in \Sing_k(\widetilde{M})$, does not depend on $j$ and is an isometric chain map. 
\end{lemma}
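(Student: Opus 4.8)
The plan is to define $\lambda_k$ on elementary tensors, verify the output is a bona fide foliated simplex, isolate the single measure-theoretic computation on which everything rests, and then deduce well-definedness, independence of $j$, the chain-map property and the isometry from it. Since every $f\in\Linf(X,\mathbb{Z})$ is essentially bounded it is a finite integer combination $f=\sum_n n\,\chi_{f^{-1}(n)}$ of indicators of Borel sets, so $\Ck(\widetilde{M})\otimes_{\mathbb{Z}}\Linf(X,\mathbb{Z})$ is spanned by the tensors $s\otimes\chi_A$, and for such a tensor I would first check that $\sigma_{s,\chi_A}\colon j^{-1}(A)\to\Gamma\backslash(\Sing_k(\widetilde{M})\times X)$, $t\mapsto[s,j(t)]$, satisfies (i)--(iii) of Definition~\ref{definition:singular:simplex}: $\pi^{-1}(\Ima(\sigma_{s,\chi_A}))\subset(\Gamma s)\times X$ with $\Gamma s$ countable gives admissibility and measurability; since the deck action on $\widetilde{M}$ — hence on $\Sing_k(\widetilde{M})$ — is free, $\mathrm{Stab}(s)=\{e\}$, so $\sigma_{s,\chi_A}$ is injective onto its image and the balancing condition reads $m_{\mathcal{L}}(j^{-1}(A))=\mu(A)=\overline{\nu}(\Ima(\sigma_{s,\chi_A}))$, which is the identity isolated below; and leafwise local finiteness holds because the full orbit $\{\gamma s\}_{\gamma\in\Gamma}$ of a single simplex is already locally finite in $\widetilde{M}$, the deck action being properly discontinuous. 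The same remarks show that a finite combination of such tensors is sent into $\mathcal{C}_k(\Gamma\backslash(\widetilde{M}\times X),\mathbb{R})$.

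The computation to isolate is twofold. First, the multiplicity function: by triviality of $\mathrm{Stab}(s)$, for $u=\gamma s$ one has $\omega_{\sigma_{s,\chi_A}}([u,x])=\chi_A(\gamma^{-1}x)$, and $\omega_{\sigma_{s,\chi_A}}$ vanishes off the slice $\pi(\{s\}\times X)$. Second, the bijection $x\mapsto[s,x]$ identifies $X$ with that slice, and under it $\overline{\nu}$ becomes $\mu$: expanding the explicit formula for $\overline{\nu}$ from Section~\ref{section:foliated}, using that $F_k$ meets each $\Gamma$-orbit in $\Sing_k(\widetilde{M})$ exactly once and that $\mu$ is $\Gamma$-invariant, one gets $\overline{\nu}(\{[s,x]:x\in B\})=\mu(B)$ for every Borel $B\subset X$. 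This measure identity is the technical heart of the lemma; everything else is bookkeeping against the multiplicity formula.

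Indeed, independence of $j$ follows at once: for two measure-preserving isomorphisms the simplices $\sigma_{s,\chi_A}$ built from them have the same multiplicity function, hence the same leafwise realizations, hence represent the same class in the quotient $\Ck(\Gamma\backslash(\widetilde{M}\times X),\mathbb{R})$. For well-definedness of $\lambda_k$ on $\Ck(\widetilde{M})\otimes_{\mathbb{Z}\Gamma}\Linf(X,\mathbb{Z})\otimes_{\mathbb{Z}\Gamma}\mathbb{R}$ I would check the defining relations against the multiplicity formula: $\mathbb{Z}$-bilinearity in the $\Linf(X,\mathbb{Z})$-slot because any relation $\sum_i n_i\chi_{A_i}=0$ in $\Linf(X,\mathbb{Z})$ forces $\sum_i n_i\,\omega_{\sigma_{s,\chi_{A_i}}}=0$ $\overline{\nu}$-a.e.; the $\Gamma$-balancing relation because $\omega_{\sigma_{\gamma s,\chi_A}}=\omega_{\sigma_{s,\gamma^{-1}\cdot\chi_A}}$; and $\mathbb{R}$-linearity by extending $a$-linearly, consistent for the same reason. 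I expect this simultaneous compatibility with the three tensor quotients and with the null-chain quotient to be the most delicate bookkeeping, though it is routine once the multiplicity formula is available.

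It remains to see that $\lambda_k$ is a chain map and an isometry. For the chain-map property it is enough, by linearity and since the foliated boundary $\partial_{\bullet}$ is the one induced by the leafwise boundaries $d_k^x$, to compare leafwise realizations on a generator: at $x$ the realization of $\partial_k\sigma_{s,\chi_A}$ is $\chi_A(x)$ times $d_k^x$ applied to the realization of $s$ in $\mathcal{L}_x$, that is $\sum_j(-1)^j\chi_A(x)$ times the realization of $s\circ\iota_j$ — exactly the realization at $x$ of $\lambda_{k-1}(\partial(s\otimes\chi_A))=\sum_j(-1)^j\sigma_{s\circ\iota_j,\chi_A}$ — so the two chains agree in the quotient. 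Finally, for the isometry I would take $\xi=\sum_{i=1}^m s_i\otimes f_i\otimes a_i$ in reduced form; then the orbits $\Gamma s_i$ are pairwise distinct, the slices $\pi(\{s_i\}\times X)$ are pairwise disjoint, there is no cancellation among the summands, $\omega_{\lambda_k(\xi)}([s_i,x])=a_i f_i(x)$ by the multiplicity formula, and using $\overline{\nu}=\mu$ on each slice
\begin{align*}
|\lambda_k(\xi)|
&=\overline{\nu}\big(|\omega_{\lambda_k(\xi)}|\big)
=\sum_{i=1}^m\int_X|a_i f_i(x)|\,d\mu(x)\\
&=\sum_{i=1}^m|a_i|\int_X|f_i(x)|\,d\mu(x)=|\xi|\,,
\end{align*}
which is the required identity.
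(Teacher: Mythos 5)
Your proof is correct. The paper gives no details for this lemma --- it simply declares the proof ``identical to'' Sauer's Lemma 3.23 --- so your argument is a self-contained version of exactly the verification that citation stands in for: the points you isolate (injectivity of $\sigma_{s,\chi_A}$ from freeness of the deck action, the identification of $\overline{\nu}$ with $\mu$ on each slice $\pi(\{s\}\times X)$, and the multiplicity formula $\omega_{\lambda_k(\xi)}([s_i,x])=a_if_i(x)$ driving both well-definedness and the isometry) are the same ones underlying Sauer's argument, and your extension through the extra $\otimes_{\mathbb{Z}\Gamma}\mathbb{R}$ factor, which is the only place the present statement departs from Sauer's integral version, is handled correctly.
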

\begin{proof}
  The proof is identical to \cite[Lemma 3.23]{sauer}. The only difference concerns the isometricity statement. To see this, is is sufficient to notice that the image of $s\otimes [\chi_A]$ has norm $\mu(j^{-1}(A))=m_{\mathcal{L}}(A)=|s\otimes [\chi_A]|$. Since $\lambda_k$ is the linear extension of maps as above and any $c\in \Ck(M;\Linf(X,\mathbb{Z})\otimes_{\mathbb{Z}\Gamma} \mathbb{R})$ has a representative of the form $c=\sum\limits_{i=1}^k s_i\otimes \chi_{A_i}\otimes a_i$ with $a_i\in \mathbb{R}$, $A_i\subset X$ Borel, $s_i\in \Sing_k(\widetilde{M})$ with $p(s_i)\neq p(s_j)$ when $i\neq j$, we have
  $$\left|\lambda_k\left(\sum\limits_{i=1}^k s_i\otimes [\chi_{A_i}]\otimes a_i\right)\right|=\sum\limits_{i=1}^k | a_i|\mu(A_i)=\left\bracevert\sum\limits_{i=1}^k s_i\otimes [\chi_{A_i}]\otimes a_i\right\bracevert^X_{\mathbb{R}}\,.$$
  This shows that $\lambda_k$ is isometric.
\end{proof}

We denote by $\Lambda_k: \Hk(M;\Linf(X,\mathbb{Z})\otimes_{\mathbb{Z}\Gamma} \mathbb{R})\to \Hk(\Gamma\backslash(\widetilde{M}\times X);\mathbb{R})$ the induced map in homology. 

\begin{defn}\label{def:fund}
  The \emph{$X$-foliated fundamental class} of $M$ is
  $$[\Gamma\backslash(\widetilde{M}\times X)]\coloneqq \Lambda_n([M]^X_\mathbb{R})\in \Hn(\Gamma \backslash(\widetilde{M}\times X);\mathbb{R})\,.$$
  A \emph{$X$-foliated fundamental cycle} of $M$ is a representative of $[\Gamma\backslash(\widetilde{M}\times X)]$ in $\Cn(\Gamma \backslash(\widetilde{M}\times X);\mathbb{R})$.
\end{defn}

\begin{oss}
  Any foliated fundamental cycle can be restricted on the leaves $\mathcal{L}(x)$ and interpreted as a ``measurable family of locally finite fundamental cycles". Indeed, it turns out that, for almost every $x\in X$, we obtain a locally finite foliated fundamental cycle in $\Cm_n^{\text{lf}}(\widetilde{M};\mathbb{R})$. To see this, for every $x\in X$ we consider the maps
  $$ \phi_x:\Cn(\Gamma \backslash(\widetilde{M}\times X);\mathbb{R})\to \Cn^{\text{lf}}(\mathcal{L}_\Sigma(x);\mathbb{R})= \Cn^{\text{lf}}(\widetilde{M};\mathbb{R})\,,\;\;\; \rho \mapsto \rho(x)$$
  and 
  $$\varphi_x: \Cn(\widetilde{M}) \otimes_{\mathbb{Z}\Gamma} \Linf(X,\mathbb{Z})\otimes_{\mathbb{Z}\Gamma} \mathbb{R}\to\Cn^{\text{lf}}(\widetilde{M};\mathbb{R})\,,\;\;\; \sum\limits_{i=1}^m s_i\otimes [f_i] \mapsto \sum\limits_{i=1}^m \sum\limits_{\gamma\in \Gamma} f_i(\gamma^{-1} x) \gamma \cdot s_i\,,$$
  where the latter is the linear extension of the map defined by Frigerio, L\"{o}h, Pagliantini and Sauer \cite[Lemma 2.5]{FLPS}. 
  Then for almost every $x\in X$ we have a commutative diagram 
\begin{center}
  \begin{tikzcd}
    \Cn(\widetilde{M}) \otimes_{\mathbb{Z}\Gamma} \Linf(X,\mathbb{Z})\otimes_{\mathbb{Z}\Gamma} \mathbb{R}\arrow{rr}{\lambda_n}  \arrow[swap]{rd}{\varphi_x} & & \Cn(\Gamma \backslash(\widetilde{M}\times X);\mathbb{R})\arrow{ld}{\phi_x}\\
   & \Cn^{\text{lf}}(\widetilde{M};\mathbb{R})\,.&
  \end{tikzcd}
\end{center}
If $\rho\in \Cn(\Gamma \backslash(\widetilde{M}\times X);\mathbb{R})$ is a $X$-foliated fundamental cycle then $\rho(x)$ is a well-defined locally finite fundamental cycle of $\widetilde{M}$ for almost every $x\in X$.
This is a direct consequence of \cite[Lemma 2.5]{FLPS} or of the following considerations. First of all, we denote by $B(X,\mathbb{Z})$ the set of $\mathbb{Z}$-valued bounded measurable functions on $X$ and by $N(X,\mathbb{Z})$ the subset of functions vanishing almost everywhere, so that $\Linf(X,\mathbb{Z})=B(X,\mathbb{Z})/N(X,\mathbb{Z})$. In this setting we can find $c\in \Cn(\widetilde{M})\otimes B(X,\mathbb{Z})$, $b\in \text{C}_{n+1}(\widetilde{M})\otimes B(X,\mathbb{Z})$ and $z\in \text{C}_{n}(\widetilde{M})\otimes N(X,\mathbb{Z})$ such that $\rho=\lambda_n(c+\partial \ell(b) + \ell(z))$, $c=\ell(i(w))$ and $w$ is an integral fundamental cycle in $\Cn(\widetilde{M})\otimes_{\mathbb{Z}\Gamma} \mathbb{Z}$. Here $\lambda_n$ is the same of Lemma \ref{lemma:lambda} but defined on $B(X,\mathbb{Z})$ and $\ell$ and $i$ are the same as in Diagram \ref{diagram}. 
We therefore have
\begin{align*}
  \rho(x)&=\lambda_n(c+\partial \ell(b) + \ell(z))(x)\\
  &=\lambda_n(\ell(i(w)))(x)+\lambda_n(\partial \ell(b))(x) + \lambda_n(\ell(z))(x)\\
  &=w+\partial(\lambda_n(\ell(b))(x))\,,
\end{align*}
where we exploited the linearity of $\lambda_n$ and of $\varphi_x$, the fact that $\lambda_n$ commutes with $\partial$ and the fact $\lambda_n(z)\in \mathcal{C}_n^{\text{a.e.}}(\widetilde{M}\times X)$. 
\end{oss}

Since $\lambda_n$ is isometric, $\Lambda_n$ does not increase the norm. Thus, combining with Lemma \ref{lemma:simplicial:volume} we have
\begin{equation}\label{equation:inequality:foliated}
  |[\Gamma\backslash(\widetilde{M}\times X)]|\leq \left\bracevert[M]_{\mathbb{R}}^X\right\bracevert^X_{\mathbb{R}} = ||M||\,.
\end{equation}
The opposite inequality, which is the non-trivial part of Theorem \ref{theorem:simplicial:volume}, follows by a technical construction of Sauer. We prepare the proof with the following
\begin{lemma}[Sauer]\label{lemma:sauer}
  Fix $\varepsilon>0$. Let $\rho \in \Cn(\Gamma \backslash(\widetilde{M}\times X);\mathbb{R})$. Then there exists $c\in \Cn(M;\Linf(X,\mathbb{Z})\otimes_{\mathbb{Z}\Gamma} \mathbb{R})$ such that $|\rho -\lambda_n(c)|< \varepsilon$.
\end{lemma}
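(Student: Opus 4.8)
\textbf{Proof proposal for Lemma \ref{lemma:sauer}.}

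The plan is to reduce the statement to an approximation result already contained in Sauer's paper, adapting it from integral to real coefficients. First I would fix a representative of $\rho$ as a finite real linear combination $\rho = \sum_{j=1}^N a_j \sigma_j$ of singular foliated $k$-simplices, with $a_j \in \mathbb{R}$. Since $\lambda_n$ is linear and we are free to enlarge $c$, it suffices to approximate each summand $a_j\sigma_j$ separately, so I may assume $\rho = a\sigma$ for a single foliated simplex $\sigma \colon A \to \Gamma\backslash(\Sing_n(\widetilde{M})\times X)$ with $A\subset\mathbb{R}$ of finite Lebesgue measure, and even (after rescaling, or splitting off the integer part) reduce to the case where the coefficient is controlled. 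The key geometric input is that $\Ima(\sigma)$ is admissible, hence contained in $\pi(C\times X)$ for a countable set $C\subset\Sing_n(\widetilde{M})$; enumerating $C = \{s_1, s_2, \ldots\}$ and setting $A_m = \sigma^{-1}(\pi(\{s_m\}\times X))$, condition (ii) in Definition \ref{definition:singular:simplex} gives $m_{\mathcal{L}}(A) = \sum_m \int_X |\sigma^{-1}([s_m,x])|\,d\mu(x) = \sum_m m_{\mathcal{L}}(A_m)$ (up to the fibering bookkeeping), so finitely many indices $m\le M$ already capture all but $\varepsilon$-worth of the weight.

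Next I would build $c$ explicitly. For each $m\le M$, the ``slice'' of $\sigma$ lying over $s_m$ is recorded by the measurable function $x \mapsto |\sigma^{-1}([s_m,x])| \in \mathbb{Z}_{\ge 0}$, which after composing with a measure-preserving isomorphism $j\colon [0,1]\to X$ as in Lemma \ref{lemma:lambda} can be realized (up to a null set) as a finite $\mathbb{Z}_{\ge0}$-combination of characteristic functions $\chi_{B}$ of Borel sets $B\subset X$. Feeding these through the defining formula of $\lambda_n$, the chain $c \coloneqq a\sum_{m\le M} s_m \otimes f_m \otimes 1 \in \Cn(M,\Linf(X,\mathbb{Z})\otimes_{\mathbb{Z}\Gamma}\mathbb{R})$, where $f_m$ is the multiplicity function of the $m$-th slice, satisfies $\lambda_n(c)$ and $\rho$ having multiplicity functions that agree over $\pi(\{s_1,\ldots,s_M\}\times X)$ and differ only on the tail. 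Then, because $\lambda_n$ is isometric (Lemma \ref{lemma:lambda}) and the norm \eqref{equation:norm} is additive over the disjoint pieces indexed by $C$, we get $|\rho - \lambda_n(c)| = |a|\sum_{m>M}\int_X |\sigma^{-1}([s_m,x])|\,d\mu(x)$, which is the tail of a convergent series and hence $<\varepsilon$ once $M$ is large. One must also check that $c$ is a genuine element of the tensor product in reduced form, i.e.\ that the $s_m$ project to distinct simplices in $M$ — this can be arranged by grouping together those $s_m$ with the same image under $p$, which only decreases the number of summands.

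The main obstacle, as the author's remark ``The proof is identical to \cite[Lemma 3.23]{sauer}'' for Lemma \ref{lemma:lambda} suggests is also the crux here, is the measure-theoretic bookkeeping: one has to verify that the slices $A_m$ really do realize the multiplicities as bounded measurable functions (so that the $f_m$ land in $\Linf(X,\mathbb{Z})$ and not merely in some larger space), that condition (iii) of Definition \ref{definition:singular:simplex} is not disturbed by truncating to $m\le M$ (it is, since a subchain of a locally finite chain is locally finite), and that the natural measure $\overline{\nu}$ of the truncated image behaves additively — all of which rests on the Borel system of measures and the fundamental-domain description recalled before Remark \ref{remark:decomposition}. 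Beyond this, the real-coefficient case introduces nothing genuinely new: the extra tensor factor $\otimes_{\mathbb{Z}\Gamma}\mathbb{R}$ is inert for the approximation, since $\lambda_n$ treats the real coefficient $a$ as a scalar and the norm on $\Ck(M,\Linf(X,\mathbb{Z})\otimes_{\mathbb{Z}\Gamma}\mathbb{R})$ was defined precisely so that this scalar factors out. Thus I expect the proof to be, as Sauer's is, a careful but essentially routine unwinding of definitions, and I would likely state it as such and refer the reader to \cite[proof of Lemma 3.23 and Theorem 3.25]{sauer} for the details that transfer verbatim.
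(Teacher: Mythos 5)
Your overall route is the same as the paper's: reduce by linearity to a single foliated simplex, truncate its image so that only finitely many of the countably many underlying simplices of $\widetilde{M}$ carry all but $\varepsilon$ of the weight, and then exhibit the truncated chain explicitly as $\lambda_n$ of a sum $\sum_m s_m\otimes f_m$. There is, however, one genuine gap. You truncate only in the direction of the countable family $C=\{s_1,s_2,\ldots\}$ (keeping $m\le M$) and then assert that each slice multiplicity $x\mapsto|\sigma^{-1}([s_m,x])|$ ``can be realized as a finite $\mathbb{Z}_{\ge0}$-combination of characteristic functions'', deferring its boundedness to bookkeeping. But this function need not be essentially bounded: condition (ii) of Definition \ref{definition:singular:simplex} only yields $\int_X|\sigma^{-1}([s_m,x])|\,d\mu(x)<\infty$, i.e.\ integrability, and a countable-to-one $\sigma$ of finite weight can have multiplicity over a single $s_m$ that blows up on sets of small measure. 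In that case your $f_m$ does not lie in $\Linf(X,\mathbb{Z})$ and the proposed $c$ is not a legal chain, so this step would fail as written.

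The paper closes exactly this hole by exhausting $\Ima(\rho)$ with the doubly indexed sets $W_{j,k}=\{(s_i,x)\mid |\rho^{-1}(s_i,x)|\le j,\ i\le k\}$: the index $k$ is your truncation of $C$, while the index $j$ caps the multiplicity. This second cut is what guarantees that the coefficient functions are bounded, and it is also what allows the Selection Theorem to partition $A_{j,k}$ into finitely many Borel pieces $B_{r,q}$ on which $\rho$ is injective, so that $c=\sum\chi_{B_{r,q}}\otimes s_q$ involves only characteristic functions. Since $\bigcup_{j,k}W_{j,k}=\Ima(\rho)$ and the weight is a finite measure, the extra truncation costs nothing: $m_{\mathcal{L}}(A\setminus A_{j,k})<\varepsilon$ still holds for $j,k$ large. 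With this one additional cut your argument goes through; the rest of your outline (additivity of the norm over the pieces indexed by $C$, inertness of the $\otimes_{\mathbb{Z}\Gamma}\mathbb{R}$ factor, passing to reduced form) matches the paper's proof.
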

\begin{proof}
  The proof is inspired from \cite[Lemma 3.17]{sauer}.
  Without loss of generalities we may assume that $\rho$ is a single simplex $A\to \Gamma\backslash \Sing_n(\widetilde{M})\times X$. Fix a fundamental domain $D_n$ of $\Gamma\curvearrowright \Sing_n(\widetilde{M})$. Since $\Ima(\rho)$ is admissible, after identifying $\Gamma\backslash \Sing_n(\widetilde{M})\times X$ with $D_n\times X$ we may assume that $\Ima(\rho)$ is contained in $C\times X$ where $C=\{s_1,s_2,\ldots \}$ is a countable set of singular $n$-simplices in $\widetilde{M}$. 
  Then we consider the direct system of sets
  $$W_{j,k}\coloneqq \{(s_i,x)\,|\, x\in X\,,\, |\rho^{-1}(s_i,x)|\leq j \,,\, i\leq k\}$$
  and their preimage 
  $A_{j,k}=\rho^{-1}(W_{j,k})$. 
  Since the $W_{j,k}$'s form a direct system with $\bigcup_{j,k} W_{j,k}=\Ima(\rho)$, for sufficiently large $j,k$ one has $m_{\mathcal{L}}(A\setminus A_{j,k})<\varepsilon$.
  Then we have 
  $$|\rho-\rho|_{A_{j,k}}|=|\rho|_{A\setminus A_{j,k}}|=m_{\mathcal{L}}(A\setminus A_{j,k})<\varepsilon\,.$$
Indeed 
\begin{align*}
\left|\rho-\rho_{|A_{j,k}}\right|&=\int_{\Ima(\rho-\rho|_{A_{j,k}}) }
|\omega_{\rho-\rho_{|A_{j,k}}}([s,x])| d\overline{\nu}([s,x])\\
&=\int_{(C\times X)\setminus W_{j,k}}
|\rho^{-1}([s,x])|d\overline{\nu}([s,x])=\left|\rho|_{A\setminus A_{j,k}}\right|\,.
\end{align*}
We now show that $\rho|_{A_{j,k}}$ is in the image of $\lambda_n$. To this end, we exploit the Selection Theorem \cite[Lemma 3.1]{sauer05} to find a Borel partition of $$A_{j,k}=\bigsqcup_{1\leq r,q\leq j,k} B_{r,q}$$ such that $\Ima(\rho|_{B_{r,q}})\subset \{s_q\}\times X$ and $\rho|_{B_{r,q}}:B_{r,q}\to \{s_q\}\times X$ is injective. 
Setting 
$$c=\sum_{1\leq r,q\leq j,k} [\chi_{B_{r,q}}]\otimes s_q\in \Cn(\widetilde{M})\otimes_{\mathbb{Z}\Gamma} \Linf(X, \mathbb{Z})$$
we immediately get 
$\lambda_n(c)=\rho|_{B_{r,q}}$. 
The general case, namely when $\rho$ is a chain, follows by extending $\mathbb{R}$-linearly the above argument.
\end{proof}

We are now ready to show that Equation \eqref{equation:inequality:foliated} is in fact an equality.
\begin{rec_thm}[\ref{theorem:simplicial:volume}]
  Let $M$ be a triangulated oriented closed connected $n$-manifold. Let $p: \widetilde{M}\to M$ be its universal cover and consider an essentially free measure preserving action $\Gamma=\pi_1(M)\curvearrowright (X,\mu)$ on a standard Borel probability space. Then 
  $$||M||= |[\Gamma\backslash(\widetilde{M}\times X)]|\,.$$
\end{rec_thm}
\begin{proof}
  The structure of the proof follows \cite[Lemma 3.24]{sauer}, namely we prove that $\Lambda_n$ is isometric.
  Since it is norm non-increasing, we only needs to show that for every cycle $c\in \Cn(M;\Linf(X,\mathbb{Z})\otimes_{\mathbb{Z}\Gamma} \mathbb{R})$ and every $\rho\in \Cn(\Gamma \backslash(\widetilde{M}\times X);\mathbb{R})$ such that $\lambda_n(c)$ and $\rho$ are cohomologous, 
  one can find a chain $b\in \textup{C}_{n+1}(M;\Linf(X,\mathbb{Z})\otimes_{\mathbb{Z}\Gamma} \mathbb{R})$ with $\left\bracevert c+\partial_{n+1} b\right\bracevert^X_{\mathbb{R}}< |\rho|+\varepsilon$ for every $\varepsilon>0$. 
To this end, we denote $\beta \in \textup{C}_{n+1}(\Gamma\backslash(\widetilde{M}\times X);\mathbb{R})$ the element such that $\lambda_n(c)+\partial_{n+1}\beta=\rho$. Then by Lemma \ref{lemma:sauer} there exists $b\in \textup{C}_{n+1}(M;\Linf(X,\mathbb{Z})\otimes_{\mathbb{Z}\Gamma} \mathbb{R})$ such that 
$$\left\bracevert\beta-\lambda_{n+1}(b)\right\bracevert^X_{\mathbb{R}}< \varepsilon/(n+2)\,.$$
We compute 
\begin{align*}
  \left\bracevert c+\partial_{n+1} b\right\bracevert^X_{\mathbb{R}} &= |\lambda_n(c+\partial_{n+1} b)|\\
  &= |\lambda_n(c)+\lambda_n(\partial_{n+1} b)|\\
  &= |\rho-\partial_{n+1}\beta+\partial_{n+1} (\lambda_{n+1} (b))|\\
  &\leq |\rho|+(n+2)|\beta-\lambda_{n+1} (b)|< |\rho|+\varepsilon\,.
\end{align*}
In the above computation we first used the fact that $\lambda_n$ is isometric, then its linearity and finally we moved from the third line to the last one exploiting the triangular inequality and the obvious bound 
$||\partial_{n+1} ||_{\text{op}}\leq n+2$.
This concludes the proof.
\end{proof}

\section{A dual point of view}\label{section:foliated:cohomology}
In this section we take the topological dual of the complex 
$\Cm_\bullet(\Gamma\backslash(\widetilde{M}\times X);\mathbb{R})$ and we study its cohomology. Inspired by the isometric isomorphism between the bounded cohomology of topological spaces and the one of their fundamental group \cite{Ivanov,vbc}, we connect the bounded cohomology foliated simplices with probability measure preserving actions and (Theorem \ref{theorem:bounded:cohomology}), more generally, with the bounded cohomology of measured groupoids \cite{sarti:savini:groupoids}. As a by-product, we obtain a vanishing criterion for simplicial volume (Corollary \ref{corollary:vanishing}). 

\subsection{Singular foliated bounded cohomology.}
We consider the topological dual of $\Ck(\Gamma \backslash(\widetilde{M}\times X); \mathbb{R})$, that is 
$$\Cb^k(\Gamma\backslash(\widetilde{M}\times X);\mathbb{R})\coloneqq \left\{f: \Ck(\Gamma \backslash(\widetilde{M}\times X); \mathbb{R})\to \mathbb{R}\,,\, \sup\limits_{ |\sigma|=1} |f(\sigma)|<+\infty\right\}\,.$$
Define the coboundary operator $\delta^k:\Cb^k(\Gamma\backslash(\widetilde{M}\times X);\mathbb{R})\to \Cb^{k+1}(\Gamma\backslash(\widetilde{M}\times X);\mathbb{R})$ as the dual of $\partial_k$. 
Each $\Cb^k(\Gamma\backslash(\widetilde{M}\times X);\mathbb{R})$ is endowed with the norm 
$$||f||_{\infty}\coloneqq  \sup\limits_{|\sigma|=1} |f(\sigma)|\,.$$

\begin{rec_defn}[\ref{def:foliated:bc}]
  The \emph{singular foliated bounded cohomology} of $M$ is 
  $$\Hb^k(\Gamma \backslash(\widetilde{M}\times X); \mathbb{R})\coloneqq \Hm^k(
    \Cb^{\bullet}(\Gamma \backslash(\widetilde{M}\times X); \mathbb{R}),\delta^{\bullet})\,.$$
\end{rec_defn}

Our next goal is to prove a relation between singular foliated bounded cohomology and classic singular bounded cohomology. To this end we exploit the homological algebraic technology described in Section \ref{sec:homo}, in particular the Fundamental Lemma of Homological Algebra (Lemma \ref{lemma:fundamental}). Precisely, we need to show that the above bounded cohomology can be computed with a \emph{strong resolution} by \emph{relatively injective} $\Gamma$-modules of some $\Gamma$-module. 
The idea comes from classic singular bounded cohomology, where one has the obvious but fundamental identification 
$$\Cb^k(M;\mathbb{R})\cong \Cb^k(\widetilde{M};\mathbb{R})^{\Gamma}\,$$
and the modules on the right hand side form a strong resolution of $\mathbb{R}$ by relatively injective $\Gamma$-modules \cite[Chapter 5]{miolibro}.
In our case the new resolution is obtained introducing a non-equivariant version of foliated simplices and the associated chain complex.

\begin{defn}\label{definition:singular:simplex:non:eq}
  A countable-to-one map $\sigma:  A\to \Sing_k(\widetilde{M})\times X$ where $A\subset \mathbb{R}$ is a subset of finite Lebesgue measure is a \emph{non-equivariant singular foliated $k$-simplex} if
  \begin{itemize}
    \item[(i)] $\Ima(\sigma)$ is admissible and $\sigma:A\to \Ima(\sigma)$ is measurable;
    \item[(ii)] $$m_{\mathcal{L}}(A)=\int_{\Ima(\sigma)} |\sigma^{-1}(s,x)| d\nu(s,x)\,;$$
    \item[(iii)] for almost every $x\in X$ the set 
    $$\bigcup_{\gamma\in \Gamma}\gamma^{-1}\left( \Ima(\sigma)\cap (\widetilde{M}\times\{\gamma x\} )\right)$$ is locally finite in $\widetilde{M}\times \{x\}$. 
  \end{itemize}

  The \emph{norm} (or the \emph{weight}) of $\sigma$ is $|\sigma|\coloneqq m_{\mathcal{L}}(A)$. 
\end{defn}

We consider the set $S_k(\widetilde{M}\times X)$ of non-equivariant singular foliated $k$-simplices and the free $\mathbb{R}$-module 
$$\mathcal{C}_k(\widetilde{M}\times X;\mathbb{R})\coloneqq \mathbb{R}[S_k(\widetilde{M}\times X)]\,.$$

As a consequence of condition (iii), for almost every $x$ the set 
$\Ima(\sigma)\cap \widetilde{M}\times \{x\}$ is locally finite. Thus one can exploit the boundary operator on each $\widetilde{M}\times \{x\}$ to define a boundary operator $\partial_k$ on $\mathcal{C}_k(\widetilde{M}\times X;\mathbb{R})$. 

The chain complex $(\mathcal{C}_k(\widetilde{M}\times X;\mathbb{R}),\partial_k) $ comes with a natural $\Gamma$-action defined as 
$$(\gamma\cdot \rho) (t)=\gamma \cdot \rho(t)\,,$$
where on the right $\Gamma$ is acting diagonally.
The same technique used in the equivariant setting allows to quotient out by the subcomplex of chains vanishing almost everywhere. Precisely we define the \emph{multiplicity function} of a simplex $\sigma$ as 
$$\omega_{\sigma}: \Sing_k(\widetilde{M})\times X\to \mathbb{Z}\,,\;\;\; \omega_{\sigma}(s,x)=|\sigma^{-1}(s,x)|\,,$$
we extend by linearity to any $\rho\in\mathcal{C}_k(\widetilde{M}\times X; \mathbb{R})$ and consider the subspace $$\mathcal{C}_k^{\text{a.e.}}(\widetilde{M}\times X; \mathbb{R})=\left\{\rho\in \mathcal{C}_k(\widetilde{M}\times X; \mathbb{R})\,,\, \sum\limits_{(s,x)\in \Ima(\sigma)}\omega_{\rho}(s,x)s=0 \; \text{ for a.e. } x\in X\right\} \,.$$ 
The same argument of Section \ref{section:foliated} shows that 
$(\mathcal{C}_\bullet^{\text{a.e.}}(\widetilde{M}\times X; \mathbb{R}),\partial_\bullet)$ is a subcomplex of $(\mathcal{C}_\bullet^(\widetilde{M}\times X; \mathbb{R}),\partial_\bullet)$.
Then we define $$\Ck(\widetilde{M}\times X; \mathbb{R})\coloneqq \mathcal{C}_k(\widetilde{M}\times X;\mathbb{R})/\mathcal{C}_k^{\text{a.e.}}(\widetilde{M}\times X;\mathbb{R})\,$$ so that $(\Cm_\bullet(\widetilde{M}\times X; \mathbb{R}),\partial_\bullet)$ is a complex of normed $\Gamma$-spaces with the linear extension of the norm defined on a simplex $\sigma\in S_k(\widetilde{M}\times X)$ as
$$|\sigma|\coloneqq \int\limits_{\Ima(\sigma)}|\omega_{\sigma}|\,.$$

Consider the dual chain complex 
$(\Cb^{\bullet}(\widetilde{M}\times X; \mathbb{R}),\delta^{\bullet})$ and the subcomplex of $\Gamma$-invariant chains $(\Cb^{\bullet}(\widetilde{M}\times X; \mathbb{R})^{\Gamma},\delta^{\bullet})$.
Our next goal is to prove an isometric isomorphism similar to the one of Equation \eqref{equation:isom:iso0}, but we first need to pass to the \emph{completion}.

\subsection{The completion.}\label{sec:completion}
Consider the \emph{completion}
$\widehat{\mathcal{C}}_{k}(\Gamma\backslash(\widetilde{M}\times X);\mathbb{R})$ and $\widehat{\mathcal{C}}_{k}(\widetilde{M}\times X;\mathbb{R})$ respectively of the spaces $\mathcal{C}_k(\Gamma\backslash(\widetilde{M}\times X);\mathbb{R})$ and $\mathcal{C}_k(\widetilde{M}\times X;\mathbb{R})$ with respect to the associated norms.
As usual, we quotient out the chains of zero norm, and we therefore obtain the Banach spaces $\widehat{\Cm}_{k}(\Gamma\backslash(\widetilde{M}\times X);\mathbb{R})$
and $\widehat{\Cm}_{k}(\widetilde{M}\times X;\mathbb{R})$.

By \cite[Proposition 1.7]{loh:thesis}, the inclusion $\Ck(\Gamma \backslash(\widetilde{M}\times X);\mathbb{R})\hookrightarrow \widehat{\Cm}_{k}(\Gamma\backslash(\widetilde{M}\times X);\mathbb{R})$ induces isometric maps in homology
$$\iota_k:\Hk(\Gamma\backslash(\widetilde{M}\times X);\mathbb{R})\to \Hk(\widehat{\Cm}_{\bullet}(\Gamma\backslash(\widetilde{M}\times X);\mathbb{R}))\,.$$
In particular, by Theorem \ref{theorem:simplicial:volume} we can compute the simplicial volume via chains in $\widehat{\Cm}_{n}(\Gamma\backslash(\widetilde{M}\times X);\mathbb{R})$, namely
\begin{equation}\label{equation:simp:volume:l1}
  ||M||=|[\Gamma\backslash(\widetilde{M}\times X)]|=|\iota_n([\Gamma\backslash(\widetilde{M}\times X)])|\,.
\end{equation}
Moreover, since any normed chain complex is dense in its completion, passing to the topological dual one immediately has
\begin{equation}\label{eq:isom:bc1}
  \Hb^k(\Gamma \backslash(\widetilde{M}\times X); \mathbb{R})=\Hm^k(\widehat{\Cm}_{\bullet}(\Gamma\backslash(\widetilde{M}\times X);\mathbb{R})')
\end{equation} and 
\begin{equation}\label{eq:isom:bc2}
  \Hm^k(\Cb^{\bullet}(\widetilde{M}\times X; \mathbb{R})^{\Gamma})=\Hm^k((\widehat{\Cm}_{\bullet}(\widetilde{M}\times X;\mathbb{R})')^{\Gamma})\,.
\end{equation}

We now exhibit a natural isometric identification 
$$\widehat{\Cm}_{\bullet}(\Gamma\backslash(\widetilde{M}\times X);\mathbb{R})'\cong (\widehat{\Cm}_{\bullet}(\widetilde{M}\times X;\mathbb{R})')^{\Gamma}\,,$$
between the complexes appearing in the right hand sides of Equation \eqref{eq:isom:bc1} and \eqref{eq:isom:bc2}.
This will allow to compute singular foliated bounded cohomology via bounded functions on non-equivariant chains. 

\begin{lemma}\label{lemma:isom}
  The complexes $(\widehat{\Cm}_{\bullet}(\Gamma\backslash(\widetilde{M}\times X);\mathbb{R})',\delta^\bullet)$ and $((\widehat{\Cm}_{\bullet}(\widetilde{M}\times X;\mathbb{R})')^\Gamma,\delta^\bullet)$ are isometrically isomorphic. 
\end{lemma}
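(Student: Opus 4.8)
The plan is to realise the isometric isomorphism as the dual of an explicit chain map. Let $\pi\colon \Sing_k(\widetilde M)\times X\to \Gamma\backslash(\Sing_k(\widetilde M)\times X)$ be the quotient projection. First I would check that post-composition with $\pi$ sends a non-equivariant singular foliated $k$-simplex $\sigma\colon A\to \Sing_k(\widetilde M)\times X$ (Definition~\ref{definition:singular:simplex:non:eq}) to an equivariant one: admissibility and the leafwise local finiteness condition~(iii) transfer immediately, while the normalisation~(ii) follows from Remark~\ref{remark:decomposition} together with the $\Gamma$-invariance of the natural measure $\nu$ (itself a consequence of the $\Gamma$-invariance of $\mu$), since, after identifying $\Gamma\backslash(\Sing_k(\widetilde M)\times X)$ with a Borel fundamental domain $\mathcal F_k$, summing the identity $\int_{\mathcal F_k}|\sigma^{-1}(\gamma\,\cdot\,)|\,d\nu=\int_{\gamma\mathcal F_k}|\sigma^{-1}|\,d\nu$ over $\gamma\in\Gamma$ recovers $\int_{\Ima\sigma}|\sigma^{-1}|\,d\nu=m_{\mathcal L}(A)$. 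Extending $\mathbb R$-linearly and passing to the quotients by a.e.-vanishing chains — legitimate because the resulting map is norm non-increasing, hence kills zero-norm chains — I obtain $P_\bullet\colon \Cm_\bullet(\widetilde M\times X,\mathbb R)\to \Cm_\bullet(\Gamma\backslash(\widetilde M\times X),\mathbb R)$. Since $\Gamma$ acts freely on $\widetilde M\times X$, the map $\pi$ restricts to a bijection $\widetilde M\times\{x\}\to\mathcal L_x$ compatible with the simplicial and hence with the leafwise boundary structures; therefore $P_\bullet$ commutes with $\partial_\bullet$ and satisfies $P_\bullet\circ\gamma=P_\bullet$ for all $\gamma\in\Gamma$. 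A direct computation with Remark~\ref{remark:decomposition} also yields $|P_k\rho|\leq|\rho|$. After extending to the completions of Section~\ref{sec:completion}, the cochain map $\Phi\coloneqq P_\bullet^{\,*}$ runs from $\widehat{\Cm}_\bullet(\Gamma\backslash(\widetilde M\times X),\mathbb R)'$ into $\widehat{\Cm}_\bullet(\widetilde M\times X,\mathbb R)'$, and it lands in the $\Gamma$-invariants precisely because $P_\bullet\circ\gamma=P_\bullet$.

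Next I would produce a norm-preserving splitting at the level of chains: send an equivariant foliated simplex $\tau$ to its canonical lift $Q_k\tau$, defined by $t\mapsto$ the unique representative of $\tau(t)$ lying in $\mathcal F_k$. This is measurable since $\mathcal F_k$ is Borel, is again a non-equivariant foliated simplex (here condition~(iii) of Definition~\ref{definition:singular:simplex:non:eq} is exactly what makes the leafwise local finiteness survive), and satisfies $\pi\circ Q_k\tau=\tau$ together with $|Q_k\tau|=|\tau|$, because $\nu|_{\mathcal F_k}$ corresponds to $\overline\nu$ under $\mathcal F_k\cong\Gamma\backslash(\Sing_k(\widetilde M)\times X)$. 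Extending linearly and to the completions, $Q_\bullet$ is an isometric embedding with $P_\bullet\circ Q_\bullet=\id$. (Note that $Q_\bullet$ is \emph{not} a chain map — a face of a simplex whose first vertex lies in the fundamental domain $F$ need not again have first vertex in $F$ — which is why I dualise $P_\bullet$ and not $Q_\bullet$.) From $Q_\bullet^{\,*}\circ P_\bullet^{\,*}=\id$ we get that $\Phi$ is injective, and combining $|P_k\rho|\leq|\rho|$ with $|Q_k\tau|=|\tau|$ and the identity $f=Q_k^{\,*}P_k^{\,*}f$ gives $\|P_k^{\,*}f\|=\|f\|$, so $\Phi$ is an isometric embedding onto its image, and it intertwines $\delta^\bullet$ since $P_\bullet$ is a chain map.

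It remains to prove surjectivity of $\Phi$ onto $\bigl(\widehat{\Cm}_\bullet(\widetilde M\times X,\mathbb R)'\bigr)^\Gamma$, and this is the one step where the completion is genuinely indispensable. By linearity it suffices to treat a single non-equivariant foliated simplex $\sigma$. Decompose its domain as the countable disjoint union $A=\bigsqcup_{\gamma\in\Gamma}\sigma^{-1}(\gamma\mathcal F_k)$ and put $\sigma_\gamma\coloneqq\sigma|_{\sigma^{-1}(\gamma\mathcal F_k)}$ (a restriction of a foliated simplex to a measurable piece of its domain is again one); these have pairwise disjoint images and $\sum_\gamma|\sigma_\gamma|=|\sigma|<\infty$, so $\sigma=\sum_\gamma\sigma_\gamma$ converges in $\widehat{\Cm}_k(\widetilde M\times X,\mathbb R)$. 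Since $\sigma_\gamma$ is supported in $\gamma\mathcal F_k$ one has $Q_kP_k\sigma_\gamma=\gamma^{-1}\cdot\sigma_\gamma$, whence $\sigma-Q_kP_k\sigma=\sum_\gamma(\sigma_\gamma-\gamma^{-1}\cdot\sigma_\gamma)$ in the completion. Any $g\in\bigl(\widehat{\Cm}_k'\bigr)^\Gamma$ is continuous and $\Gamma$-invariant, so $g(\sigma_\gamma-\gamma^{-1}\cdot\sigma_\gamma)=0$ for every $\gamma$, and continuity forces $g\circ Q_kP_k=g$, first on simplices and then, by density, on all of $\widehat{\Cm}_k$; hence $g=P_k^{\,*}(Q_k^{\,*}g)$ lies in the image of $\Phi$. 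Assembling these facts, $\Phi$ is the desired degreewise isometric isomorphism, and it commutes with $\delta^\bullet$, so it is an isometric isomorphism of complexes. The genuinely delicate verifications — that $\pi\circ\sigma$, $Q_k\tau$ and the pieces $\sigma_\gamma$ are bona fide foliated simplices (the bookkeeping with admissibility, the natural measures $\nu$ and $\overline\nu$, and leafwise local finiteness) and that $\sum_\gamma\sigma_\gamma$ converges in the completed normed complex — are exactly where I expect to spend care; the algebra of duality is then formal.
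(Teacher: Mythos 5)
Your proof is correct and follows essentially the same route as the paper: your $P_\bullet$ is the paper's map $\rho\mapsto\overline{\rho}$ (composition with $\pi$), your $Q_k$ is composition with the Borel section $\eta=(\pi|_{\mathcal F_k})^{-1}$, and your surjectivity argument --- decomposing $\sigma=\sum_{\gamma}\sigma|_{\sigma^{-1}(\gamma\mathcal F_k)}$ as a convergent sum in the completion and invoking continuity and $\Gamma$-invariance of the functional --- is exactly the paper's independence-of-the-section computation. The only (harmless) difference is packaging: you dualise the single chain map $P_\bullet$ and prove bijectivity, whereas the paper exhibits two mutually inverse maps; your remark that $Q_\bullet$ is not a chain map flags a point the paper passes over quickly, but it is absorbed by the same invariance argument.
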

\begin{proof}
We construct mutually inverse cochain maps of norm one. 
First of all, fix $\eta:\Gamma\backslash \Sing_{\bullet}(\widetilde{M})\times X\to\Sing_{\bullet}(\widetilde{M})\times X$ any Borel section of the projection $\pi: \Sing_{\bullet}(\widetilde{M})\times X\to\Gamma\backslash \Sing_{\bullet}(\widetilde{M})\times X$. Given $F\in (\widehat{\Cm}_{\bullet}(\widetilde{M}\times X;\mathbb{R})')^{\Gamma}$ we define 
$f:\Sing_\bullet(\widetilde{M})\times X\to \mathbb{R}$ as follows. For every singular foliated simplex $\sigma:A\to \Gamma\backslash \Sing_\bullet(\widetilde{M})\times X$ set 
$f(\sigma)\coloneqq F(\eta\circ \sigma)$. Then we extend first by linearity to $\text{C}_{\bullet}(\widetilde{M}\times X;\mathbb{R})$ and then to its completion, obtaining a bounded linear function $f:\widehat{\Cm}_{\bullet}(\Gamma\backslash(\widetilde{M}\times X);\mathbb{R})\to \mathbb{R}$. 
Since $\sigma$ fulfils conditions (i), (ii), (iii) of Definition \ref{definition:singular:simplex}, then $\sigma$ satisfies (i), (ii) and (iii) of Definition \ref{definition:singular:simplex:non:eq}. 
We now claim that the above construction does not depend on the section $\eta$. 
To prove this, we pick a different section $\widetilde{\eta}:\Gamma\backslash \Sing_{\bullet}(\widetilde{M})\times X\to\Sing_{\bullet}(\widetilde{M})\times X$. Then, for every $[s,x]\in \Gamma\backslash \Sing_{\bullet}(\widetilde{M})\times X$ the elements 
$\eta([s,x])$ and $\widetilde{\eta}([s,x])$ belong to the same $\Gamma$-orbit in $\Sing_{\bullet}(\widetilde{M})\times X$. In other words, there is a map 
$c_\eta^{\widetilde{\eta}}: \Gamma\backslash \Sing_{\bullet}(\widetilde{M})\times X\to \Gamma$ uniquely determined by the relation
$$c_\eta^{\widetilde{\eta}}([s,x]) \eta([s,x])=\widetilde{\eta}([s,x])\,.$$
The fact that both $\eta$ and $\widetilde{\eta}$ are Borel implies that $c_\eta^{\widetilde{\eta}}$ is Borel.
Consider now the countable Borel partition 
$$\Ima(\sigma) = \bigsqcup\limits_{\gamma\in \Gamma} W_{\gamma}$$ where $W_{\gamma}\coloneqq (c_\eta^{\widetilde{\eta}} )^{-1}(\gamma)\cap \Ima(\sigma)$. Set $A_{\gamma}\coloneqq \sigma^{-1}(W_\gamma)$. Then, by definition, we have 
$\sigma=\sum\limits_{\gamma\in \Gamma}\sigma|_{A_\gamma}$ in $\widehat{\Cm}_{\bullet}(\widetilde{M}\times X; \mathbb{R})$. 
Moreover, for any $\gamma\in \Gamma$ it holds 
$$\widetilde{\eta}\circ \sigma|_{A_\gamma}=\gamma (\eta\circ \sigma|_{A_\gamma})\,.$$
Then, exploiting both the linearity of $F$ and its $\Gamma$-invariance we obtain 
\begin{align*}
  F(\widetilde{\eta}\circ \sigma)&= F\left(\sum\limits_{\gamma\in \Gamma}\widetilde{\eta}\circ \sigma|_{A_\gamma} \right)\\
  &= F\left(\sum\limits_{\gamma\in \Gamma}\gamma (\eta\circ \sigma|_{A_\gamma}) \right)\\
  &= \sum\limits_{\gamma\in \Gamma}F(\gamma(\eta\circ \sigma|_{A_\gamma})) \\
  &=\sum\limits_{\gamma\in \Gamma}F(\eta\circ \sigma|_{A_\gamma}) =F(\eta \circ \sigma)\,,
\end{align*}
which proves the claim.

Conversely, given $f\in \widehat{\Cm}_{\bullet}(\Gamma\backslash(\widetilde{M}\times X);\mathbb{R})'$ we associate (the unique extension of) $F:\textup{C}_{\bullet}(\widetilde{M}\times X;\mathbb{R})\to \mathbb{R}$ where 
$F(\rho)\coloneqq f(\pi\circ \rho)$. 
The fact that $\overline{\rho}=\pi\circ \rho$ lies in $\text{C}_{\bullet}(\Gamma\backslash(\widetilde{M}\times X);\mathbb{R})$ is a consequence of the following considerations where, without loss of generalities, we can assume that $\rho$ is a non-equivariant simplex $A\to \Sing_{\bullet}(\widetilde{M})\times X$. Hence $\overline{\rho}$ is also a simplex. 
\begin{itemize}
  \item[(i)] Since both $\rho$ and $\pi$ are countable-to-one, $\overline{\rho}$ is so. Moreover, a countable union of admissible sets is admissible \cite[Remark 3.3]{sauer}. Thus
  $$\pi^{-1}(\Ima(\overline{\rho}))=\bigsqcup\limits_{\gamma\in \Gamma} \gamma\Ima (\rho)$$
  is admissible. Thus, by definition, $\Ima (\overline{\rho})$ is admissible. 
  \item[(ii)] By condition (ii) of Definition \ref{definition:singular:simplex:non:eq} we get 
  \begin{align*}
  m_{\mathcal{L}}(A)&=\int_{\Ima(\rho)} |\rho^{-1}(s,x)|d\nu(s,x)\\
  &=\sum\limits_{\gamma\in \Gamma} \int_{\Ima(\rho)\cap \mathcal{D}_\bullet} |\rho^{-1}(s,x)|\; d(\gamma_*\nu|_{\mathcal{D}_\bullet})(s,x)\\
  &=\int_{\Ima(\rho)\cap \mathcal{D}_\bullet}\left( \sum\limits_{\gamma\in \Gamma}  |\rho^{-1}(\gamma s,\gamma x)|\right)d\nu|_{\mathcal{D}_\bullet}(s,x)\\
  &=\int_{\Ima(\overline{\rho})} |\overline{\rho}^{-1}([s,x])|d\overline{\nu}([s,x])\,. 
  \end{align*} 
  In the above computation we used the decomposition of Example \ref{example:decomposition} and the definition of $\overline{\nu}$ via the bijection $\pi|_{\mathcal{D}_\bullet}: \mathcal{D}_\bullet\to \Gamma\backslash\Sing_{\bullet}(\widetilde{M})\times X$.
\item [(iii)] For every $x\in X $ we have 
$$\Ima(\pi\circ \rho)\cap \mathcal{L}_\Sigma(x)= \bigsqcup\limits_{\gamma\in \Gamma} \gamma^{-1} \left( \Ima(\rho)\cap (\widetilde{M})\times \{\gamma x\} \right)$$
which is locally finite by condition (iii) of Definition \ref{definition:singular:simplex:non:eq}
\end{itemize}
The fact the the constructions are mutually inverse is a straightforward verification. Moreover, they both commute with the coboundary operators and do not affect the norm, hence they define mutually inverse isometric cochain maps. 
This concludes the proof.
\end{proof}

The identification of Lemma \ref{lemma:isom} descends in cohomology to isometric isomorphisms
\begin{equation}\label{eq:isom:iso}
  \Hm^k((\widehat{\Cm}_{\bullet}(\widetilde{M}\times X;\mathbb{R})')^\Gamma)\cong \Hb^k(\widetilde{M}\times X; \mathbb{R})\,.
\end{equation}
This, together with the equalities \eqref{eq:isom:bc1} and \eqref{eq:isom:bc2}, is the starting point to prove the main result of this section.
\begin{rec_thm}[\ref{theorem:bounded:cohomology}]
  Let $M$ be a triangulated oriented closed connected aspherical manifold. Let $p: \widetilde{M}\to M$ be its universal cover and consider an essentially free measure preserving action $\Gamma=\pi_1(M)\curvearrowright (X,\mu)$ on a standard Borel probability space. Then we have canonical isometric isomorphisms 
  $$\Hb^k(\Gamma \backslash(\widetilde{M}\times X); \mathbb{R})\cong \Hb^k(\Gamma;\Linf(X,\mathbb{R}))\cong \Hmb^k(\Gamma\ltimes X;\mathbb{R})$$
  for every $k\geq 0$. 
\end{rec_thm}
\begin{proof}
  The strategy is to show that the cochain complex $(\Cm_{\bullet}(\widetilde{M}\times X;\mathbb{R})',\delta^{\bullet})$ provides a strong resolution by relative injective $\Gamma$-modules of the $\Gamma$-module $\Linf(X,\mathbb{R})$. Thanks to the Fundamental Lemma of Homological Algebra Lemma \ref{lemma:fundamental}, to the equality of Equation \eqref{eq:isom:bc2} and to the isometric isomorphisms of Equation \eqref{eq:isom:iso} this allows to conclude that $\Hb^k(\Gamma \backslash(\widetilde{M}\times X); \mathbb{R})$ is canonically isomorphic to $\Hb^n(\Gamma;\Linf(X,\mathbb{R}))$. 
Then, for the isometricity part, we exhibit a norm non-increasing cochain $\Gamma$-map $\alpha^{\bullet}: \Cb^{\bullet}(\Gamma;\Linf(X,\mathbb{R}))\to \Cm_{\bullet}(\widetilde{M}\times X;\mathbb{R})'$ extending the identity of $\Linf(X,\mathbb{R})$. Thus we apply again Lemma \ref{lemma:fundamental} to conclude.
The second isomorphism is precisely the one of Equation \eqref{equation:exponential:isomorphism}.

First of all, in order to turn the cochain complex $(\Cm_{\bullet}(\widetilde{M}\times X;\mathbb{R})',\delta^{\bullet})$ into a resolution of $\Linf(X,\mathbb{R})$, we need to define an augmentation map $\epsilon: \Linf(X,\mathbb{R})\to \Cm_{0}(\widetilde{M}\times X;\mathbb{R})'$.
Given a simplex $\sigma\in S_0(\widetilde{M}\times X)$ we denote by $C\subset \Sing_0(\widetilde{M})$ the countable set such that $\Ima(\sigma)\subset C\times X$. For every $F\in \Linf(X,\mathbb{R})$ we pick a representative $f$ and set
\begin{equation}\label{eq:augmentation}
  \epsilon (F)(\sigma)\coloneqq \int_X\sum\limits_{s\in C} f(x) |\sigma^{-1}
  (s, x)| d\mu(x)=\int_{\Ima(\sigma)} f(x) \omega_{\sigma}(s,x) d\nu(y,x)\,.
\end{equation}
Then we extend to $\mathcal{C}_{0}(\widetilde{M}\times X;\mathbb{R})$ by linearity. Notice that the aboe quantity depends on the values of the function $f$ only up to null-sets, hence a different choice of representative for $F$ does not change its value. This shows that $\epsilon$ is well-defined. By Equation \eqref{eq:augmentation} it is immediate that $\epsilon$ vanishes on $\mathcal{C}_{0}^{\text{a.e.}}(\widetilde{M}\times X;\mathbb{R})$ and that
$$||\epsilon||_{\infty}=\sup_{||f||_{\infty}\leq 1} ||\epsilon(f)||_{\infty}\leq 1\,.$$
Hence $\epsilon$ induces a bounded map, still denoted by $\epsilon$, on $\textup{C}_0(\widetilde{M}\times X;\mathbb{R})$. 
We check that $\delta^0\circ \epsilon=0$. Consider $d_{\pm}:\Sing_1(\widetilde{M})\to \Sing_0(\widetilde{M})$ the face operators so that $\partial_0$ is defined as the post-composition by $d_+\otimes \id_X -d_-\otimes \id_X$. We pick $\sigma\in S_1(\widetilde{M}\times X;\mathbb{R})$ 
with $\Ima(\sigma)\subset C \times X$ and set $$C_+=\{d_+(s)\,,\,s\in C\}\,,\quad C_-=\{d_-(s)\,,\,s\in C\}\,.$$ Since for every $s\in \Sing_1(\widetilde{M})$ it holds $\omega_{\sigma}(s,x)=\omega_{\partial_0 \sigma}(d_+ s,x)=\omega_{\partial_0 \sigma}(d_- s,x)$ we have
\begin{align*}
  \delta^0\circ \epsilon(F)(\sigma)&=\epsilon(F)(\partial_0\sigma)\\ 
  &=\int_X\left(\sum\limits_{s\in C_+} f(x) \omega_{\partial_0\sigma}(s,x) -\sum\limits_{s\in C_-} f(x) \omega_{\partial_0  \sigma}(s,x)\right) d\mu(x)\\
  &=\int_X\sum\limits_{s\in C}  (f(x)  \omega_{\sigma}(s,x) -f(x)  \omega_{\sigma}(s,x)) \nu(s,x)=0
\end{align*}
Moreover, $\epsilon$ is $\Gamma$-equivariant, indeed
\begin{align*}
  \epsilon(\gamma\cdot F)(\sigma)&=\int_X\sum\limits_{s\in C} (\gamma \cdot f)(x) |\sigma^{-1}
  (s, x)| d\mu(x)\\
&=\int_X\sum\limits_{s\in \gamma^{-1} C}  f(\gamma^{-1}x) |(\gamma^{-1}\cdot \sigma)^{-1}
  ( s, \gamma^{-1} x)| d\mu(x)\\
  &=\int_X\sum\limits_{s\in \gamma^{-1} C}  f(x) |(\gamma^{-1}\cdot \sigma)^{-1}
  ( s,  x)| d\gamma_*\mu(x)\\
  &=\int_X\sum\limits_{s\in \gamma^{-1} C}  f(x) |(\gamma^{-1}\cdot \sigma)^{-1}
  ( s,  x)| d\mu(x)\\
  &= \epsilon(F)(\gamma^{-1}\cdot\sigma)= \gamma\cdot\epsilon(F ) (\sigma)\,.
\end{align*}
Thus the augmented complex 
$$0\to \Linf(X,\mathbb{R})\xrightarrow{\epsilon}\Cm_{0}(\widetilde{M}\times X;\mathbb{R})' \xrightarrow{\delta^0}\Cm_{1}(\widetilde{M}\times X;\mathbb{R})'\to \cdots $$
is a resolution of $\Linf(X,\mathbb{R})$ by $\Gamma$-modules. 

We claim that each module is relatively injective.
To prove this fact we consider the following extension 
problem 
\begin{center}
  \begin{tikzcd}
E \arrow{rr}{\iota}\arrow[swap]{rd}{\psi} && W\arrow[bend right =20,swap]{ll}{\varphi}\arrow[dotted]{ld}{?}\\
& \Cm_{k}(\widetilde{M}\times X;\mathbb{R})'\,.&
  \end{tikzcd}    
\end{center}
Given a fundamental domain $D_k$ of the action $\Gamma\curvearrowright \Sing_k(\widetilde{M})$, we denote by  $\pi_{D_k}:\Sing_k(\widetilde{M})\times X\to \Gamma$ the map that associates to each pair $(s,x)$ the unique $\gamma\in \Gamma$ such that $\gamma^{-1}s \in D_k$. Given $\sigma\in S_k(\widetilde{M}\times X)$, we set $\sigma_{D_k}:A\to \Gamma$ as $\sigma_{D_k}\coloneqq \pi_{D_k}\circ \sigma$. Then we define 
$$\beta: W\rightarrow \textup{C}_{k}(\widetilde{M}\times X;\mathbb{R})'\,,\;\;\; 
\beta(w) (\sigma)\coloneqq \frac{1}{|\sigma|} \int_A \psi (\sigma_{D_k}(t)\cdot \varphi(\sigma_{D_k}(t)^{-1} \cdot w))(\sigma) dm_{\mathcal{L}}(t)\,,$$
for every $w\in W$ and $\sigma:A\to \Sing_k(\widetilde{M})\times X$.
To see that $\beta$ is a solution of the extension problem, we first compute
\begin{align*}
  \beta(\iota(w))(\sigma)
  &=\frac{1}{|\sigma|} \int_A \psi (\sigma_{D_k}(t)\cdot \varphi(\sigma_{D_k}(t)^{-1} \cdot \iota(w)))(\sigma) dm_{\mathcal{L}}(t)\\
  &=\frac{1}{|\sigma|} \int_A \psi (\sigma_{D_k}(t)\cdot \varphi \circ \iota (\sigma_{D_k}(t)^{-1} \cdot w))(\sigma) dm_{\mathcal{L}}(t)\\
  &=\frac{1}{|\sigma|} \int_A \psi (\sigma_{D_k}(t) \sigma_{D_k}(t)^{-1} \cdot w)(\sigma) dm_{\mathcal{L}}(t)\\
  &=\frac{1}{|\sigma|} \int_A \psi (w) (\sigma) dm_{\mathcal{L}}(t)\\
  &=\frac{m_{\mathcal{L}}(A)}{|\sigma|} \psi (w) (\sigma)=\psi (w) (\sigma)\,,
\end{align*}
where we to pass from the first to the second line we exploited the $\Gamma$-invariance of $\iota$, then we used the relation $\varphi\circ \iota=\id$ and we concluded since, by definition, $|\sigma|=m_{\mathcal{L}}(A)$. 

Secondly, for any $\gamma\in \Gamma$, we have
\begin{align*}
  \beta (\gamma\cdot w)(\sigma)
  &=\frac{1}{|\sigma|} \int_A \psi (\sigma_{D_k}(t)\cdot \varphi(\sigma_{D_k}(t)^{-1} \cdot (\gamma \cdot w)))(\sigma) dm_{\mathcal{L}}(t)\\
  &=\frac{1}{|\sigma|} \int_A \psi (\sigma_{D_k}(t)\cdot \varphi((\gamma^{-1} \sigma_{D_k}(t))^{-1}  \cdot w))(\sigma) dm_{\mathcal{L}}(t)\\
  &=\frac{1}{|\gamma^{-1}\cdot \sigma|} \int_A \psi (\gamma^{-1} \sigma_{D_k}(t)\cdot \varphi((\gamma^{-1} \sigma_{D_k}(t))^{-1}  \cdot w))(\gamma^{-1} \cdot \sigma) dm_{\mathcal{L}}(t)\\
  &=\beta ( w)(\gamma^{-1} \cdot \sigma)=\gamma\cdot \beta ( w)(\sigma)\,.
\end{align*}
In the above computation we first applied the $\Gamma$-equivariance of $\psi$ and then the fact that $(\gamma\cdot\sigma)_{D_k}(t)=\gamma\cdot\sigma_{D_k}(t)$. 

Finally, the inequality $||\beta||_{\infty}\leq||\psi||_{\infty}$ follows since $\varphi$ is norm non-increasing and because the operator 
$$S_k(\widetilde{M}\times X;\mathbb{R})\to \mathbb{R}\,,\;\;\; \sigma\mapsto 
\frac{1}{|\sigma|} \int_A \psi(v)(\sigma) dm_{\mathcal{L}}(t)$$
has norm at most $||\psi||_{\infty}$.
This concludes the proof about the relative injectivity. 

We move on our proof by showing that the complex $(\Cm_{\bullet}(\widetilde{M}\times X;\mathbb{R})',\delta^{\bullet})$ is strong. In fact, a contracting homotopy can be constructed as follows.
Fix a point $\widetilde{y}_0\in\widetilde{M}$. 
We denote by $T_k:\Sing_k(\widetilde{M})\to\Sing_{k+1}(\widetilde{M})$ the map described in \cite[Lemma 5.4]{miolibro} having the following property: for every $s\in \Sing_k(\widetilde{M})$, the $0$-th
vertex of $T_k(s)$ is equal to $\widetilde{y}_0$, and has $s$ as opposite face.
We set $T_k^X:\Sing_k(\widetilde{M})\times X\to\Sing_{k+1}(\widetilde{M})\times X$ as $T_k^X=T_k^X(s,x)=(T_k(s),x)$. 
For every $k\geq 1$ and any simplex $\sigma:A\to \Sing_k(\widetilde{M})\times X$ define
$$\mathcal{T}_k(\sigma):A\to \Sing_{k+1}(\widetilde{M})\times X\,,\;\;\; \mathcal{T}_k(\sigma)\coloneqq 
T_k^X\circ \sigma\,. $$
By duality we set
$$\theta^k: \Cm_{k}(\widetilde{M}\times X;\mathbb{R})'\to\Cm_{k-1}(\widetilde{M}\times X;\mathbb{R})'$$
as (the extension by density of) the functional
$$\theta^k(f)(\sigma)\coloneqq f(\mathcal{T}_k(\sigma))\,,$$
Moreover, we set
$$\theta^0: \Cm_{0}(\widetilde{M}\times X;\mathbb{R})'\to\Linf(X,\mathbb{R})$$ as the class of 
$$\theta^0(f)(x)\coloneqq f(\sigma_{\widetilde{y}_0})\,,$$
where $\sigma_{\widetilde{y}_0}:X\to \widetilde{M}\times X$ is defined as
$\sigma_{\widetilde{y}_0}(x)=(\widetilde{y}_0,x)$. 
A direct application of the definition of both $\delta^{\bullet}$ and $\theta^{\bullet}$ implies that the following equalities
$$\theta^{k+1}\circ \delta^k+\delta^{k-1}\circ \theta^k=\id_{\Cm_{k}(\widetilde{M}\times X;\mathbb{R})'}\,,\;\;\; 
\theta^{1}\circ \delta^0+\epsilon\circ \theta^0=\id_{\Cm_{0}(\widetilde{M}\times X;\mathbb{R})'}$$
hold. 
This shows that $\theta$ is a contracting homotopy. In particular, thanks to the isometric isomorphism of Equation \eqref{eq:isom:iso}, we can conclude that there exist canonical isomorphisms 
$$\Hb^k(\Gamma \backslash(\widetilde{M}\times X); \mathbb{R})\cong \Hb^k(M;\Linf(X,\mathbb{R}))\,$$ 
in every degrees. 

As anticipated, in order to prove isometricity we need to find a norm non-increasing $\Gamma$-equivariant cochain map 
$$\iota^{\bullet}:\Cb^{\bullet}(\Gamma;\Linf(X,\mathbb{R}))\to \Cm_{\bullet}(\widetilde{M}\times X;\mathbb{R})'$$
extending the identity on $\Linf(X,\mathbb{R})$. Let $D$ be a fundamental domain for the action $\Gamma\curvearrowright \widetilde{M}$.
We consider the map $r: \Sing_0(\widetilde{M})\to \Gamma$ such that $r(s)^{-1} s(*)\in D_k$ where $*$ denotes the standard $0$-simplex. 
For a singular simplex $s\in \Sing_n(\widetilde{M})$ we denote by $s_i=s\circ v_i$ the $i$-th vertex of $s$, namely the $0$-simplex obtained precomposing $s$ with the embedding $$v_i:\Delta_0\hookrightarrow \Delta_n\,,\quad v_i(*)\coloneqq e_i\,.$$ 
We also see an element $F\in \Cb^{n}(\Gamma;\Linf(X,\mathbb{R}))$ as a class in $\Linf(\Gamma^{n+1}\times X, \mathbb{R})$ via the exponential isomorphism \eqref{equation:exponential:isomorphism}. We set
$$\iota^n(F)(\sigma)\coloneqq \frac{1}{|\sigma|} \int\limits_{X} \sum_{s\in C} f(r(s_0),\ldots,r(s_n))(x)|\sigma^{-1}(s,x)| d\mu(x)\,$$
for every representative $f$ of $F$ and every $\sigma\in S_n(\widetilde{M}\times X;\mathbb{R})$ with $\Ima(\sigma)\subset C\times X$. 
As above, another representative would differ from $f$ on a full measure subset of $X$, whence the right hand side would not change.
We now show that $\iota^{\bullet}$ is a cochain map extending the identity.
Consider $F\in \Linf(X,\mathbb{R})$, a representative $f$ and $\sigma\in S_0(\widetilde{M}\times X)$. Then we if $\varepsilon:\Linf(X,\mathbb{R})\to \Cb^0(\Gamma;\Linf(X,\mathbb{R}))$ denotes the inclusion of coefficients, we have 
\begin{align*}
  \iota^0(\varepsilon(F))(\sigma)
  &= \frac{1}{|\sigma|} \int\limits_{X} \sum_{s\in C} f(x)|\sigma^{-1}(s,x)| d\mu(x)=\epsilon(F)(\sigma)\,.
\end{align*}
For $n\geq 1$, if $\sigma\in S_{n+1}(\widetilde{M}\times X)$ with $\Ima(\sigma)\subset C\times X$, we set 
$C_i\coloneqq \{d_i(s)\,,\, s\in C\}\,.$ Then for any $F\in \Cb^n(\Gamma;\Linf(X,\mathbb{R}))$ we pick a representative $f$ and compute
\begin{align*}
  \iota^{n+1}(\delta_n(f))(\sigma)&= \frac{1}{|\sigma|} \int\limits_{X} \sum_{s\in C} \delta_n(f)(r(s_0),\ldots,r(s_{n+1}))(x)|\sigma^{-1}(s,x)| d\mu(x)\\
  &= \frac{1}{|\sigma|} \int\limits_{X} \sum_{s\in C} \sum\limits_{i=0}^{n+1} (-1)^{i} f(r(s_0),\ldots,\widehat{r(s_i)},\ldots,r(s_{n+1}))(x)|\sigma^{-1}(s,x)| d\mu(x)\\
  &= \frac{1}{|\sigma|} \int\limits_{X} \sum_{s\in C} \sum\limits_{i=0}^{n+1}  (-1)^{i} f(r(d_i(s)_0),\ldots,r(d_i(s)_{n}))(x)|\sigma^{-1}(d_i(s),x)| d\mu(x)\\
  &= \sum\limits_{i=0}^{n+1}  (-1)^{i}  \frac{1}{|\partial^i_{n+1}\sigma|} \int\limits_{X} \sum_{s\in C_i} f(r(s_0),\ldots,r(s_{n}))(x)|(\partial^i_{n+1}\sigma)^{-1}(s,x)| d\mu(x)\\
   &= \sum\limits_{i=0}^{n+1}  (-1)^{i}\iota^n(F)(\partial^i_{n+1}\sigma)=\delta^n( \iota^n(F))(\sigma)\,.
\end{align*}
Above we used the decomposition of $\partial_{n+1}$ of Equation \eqref{eq:boundary} and the equality
$$|\sigma^{-1}(d_i(s),x)|=|(\partial_{n+1}^i\sigma)^{-1} (s,x)|$$
that holds for every $s\in C_i$. As before, this computation does not depend on the choice of the representative $f$ of $F$.

To show the $\Gamma$-equivariance we pick any $\gamma\in \Gamma$ and we compute 
\begin{align*}
  \iota^n(\gamma\cdot F)(\sigma)&= \frac{1}{|\sigma|} \int\limits_{X}\sum_{s\in C} (\gamma \cdot f)(r(s_0),\ldots,r(s_n))(x)|\sigma^{-1}(s,x)| d\mu(x)\\
  &= \frac{1}{|\sigma|} \int\limits_{X}\sum_{s\in C} f(\gamma^{-1}r(s_0),\ldots,\gamma^{-1}r(s_n))(\gamma^{-1}x)|\sigma^{-1}(s,x)|  d\mu(x)\\
  &= \frac{1}{|\gamma^{-1}\cdot \sigma|} \int\limits_{X}\sum_{s\in \gamma C}  f(r(s_0),\ldots,r(s_n))(x)|(\gamma^{-1}\sigma)^{-1}(s,x)| d\mu(x)\\
  &=\iota^n(F)(\gamma^{-1}\cdot\sigma)=\gamma\cdot \iota^n(F)(\sigma)\,.
\end{align*}
In the above computation we moved from the second line to the third one exploiting the $\Gamma$-invariance of both $\mu$ and the norm on singular foliated chains. 

Moreover, since by definition $$\int\limits_X\sum_{s\in C}
|\sigma^{-1}(s,x)| d\mu(x)=m_{\mathcal{L}}(A)=|\sigma|$$
we have that $||\iota^n(F)||\leq||F||$ for every $F$, which implies that $\iota^n$ does not increase the norm. This concludes the proof. 
\end{proof}

Denote by $\langle\cdot,\cdot\rangle$ the Kronecker pairing between 
$\Hm_n(\Gamma\backslash(\widetilde{M}\times X);\mathbb{R})$ and $\Hb^n(\Gamma\backslash(\widetilde{M}\times X);\mathbb{R})$.
The duality principle of Lemma \ref{lemma:duality} implies the following consequence of Theorem \ref{theorem:simplicial:volume}.
\begin{cor}\label{corollary:duality}
  Let $M$ be a triangulated oriented closed connected aspherical $n$-manifold. Let $p: \widetilde{M}\to M$ be its universal cover and consider an essentially free measure preserving action $\Gamma=\pi_1(M)\curvearrowright (X,\mu)$ on a standard Borel probability space. Then 
  $$||M||=\max \{\langle[\Gamma\backslash(\widetilde{M}\times X)],\lambda\rangle|\,|\, \lambda\in \Hb^n(\Gamma\backslash(\widetilde{M}\times X);\mathbb{R})\,,\,||\lambda||_{\infty}\leq 1 \} \,.$$
\end{cor}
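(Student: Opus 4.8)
The corollary is the \emph{duality principle} for simplicial volume in its foliated incarnation, and the plan is to run the classical Hahn--Banach argument (cf.\ \cite{miolibro}) against the normed chain complex $\Cm_{\bullet}(\Gamma\backslash(\widetilde{M}\times X),\mathbb{R})$ and its topological dual. Two results already at our disposal carry all the content: Theorem~\ref{theorem:simplicial:volume}, which identifies $||M||$ with the seminorm $|[\Gamma\backslash(\widetilde{M}\times X)]|$ of the foliated fundamental class, and the completion package of Section~\ref{sec:completion} --- concretely Equation~\eqref{equation:simp:volume:l1}, which lets us compute that seminorm inside the Banach space $\widehat{\Cm}_{n}(\Gamma\backslash(\widetilde{M}\times X),\mathbb{R})$, and Equation~\eqref{eq:isom:bc1}, which identifies $\Hb^{n}(\Gamma\backslash(\widetilde{M}\times X),\mathbb{R})$ with $\Hm^{n}(\widehat{\Cm}_{\bullet}(\Gamma\backslash(\widetilde{M}\times X),\mathbb{R})')$.

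First I would record that the Kronecker pairing descends to (co)homology: if $f$ is a cocycle and $z$ a cycle, then $f(z)$ is unaffected by replacing $z$ with $z+\partial b$ (since $f(\partial b)=(\delta f)(b)=0$) or $f$ with $f+\delta g$ (since $(\delta g)(z)=g(\partial z)=0$). This yields at once the easy estimate: for $\lambda\in\Hb^{n}(\Gamma\backslash(\widetilde{M}\times X),\mathbb{R})$ with $||\lambda||_{\infty}\le 1$, pick for each $\varepsilon>0$ a representing cocycle $f$ with $||f||_{\infty}\le 1+\varepsilon$ and let $z$ be any foliated fundamental cycle; then $|\langle[\Gamma\backslash(\widetilde{M}\times X)],\lambda\rangle|=|f(z)|\le(1+\varepsilon)\,|z|$, and passing to the infimum over $z$ and then letting $\varepsilon\to 0$ gives $|\langle[\Gamma\backslash(\widetilde{M}\times X)],\lambda\rangle|\le|[\Gamma\backslash(\widetilde{M}\times X)]|=||M||$ by Theorem~\ref{theorem:simplicial:volume}. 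Hence the quantity on the right-hand side of the asserted identity is a supremum bounded above by $||M||$.

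The heart of the matter is to produce a class attaining this bound. Put $d\coloneqq|[\Gamma\backslash(\widetilde{M}\times X)]|=||M||$. If $d=0$, the class $\lambda=0$ realizes the maximum, so assume $d>0$. Include a foliated fundamental cycle into the completion to obtain a cycle $z\in\widehat{\Cm}_{n}(\Gamma\backslash(\widetilde{M}\times X),\mathbb{R})$ representing $\iota_{n}([\Gamma\backslash(\widetilde{M}\times X)])$; by Equation~\eqref{equation:simp:volume:l1} its seminorm is $d$, that is $d=\mathrm{dist}\bigl(z,\partial\widehat{\Cm}_{n+1}(\Gamma\backslash(\widetilde{M}\times X),\mathbb{R})\bigr)$. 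As $\widehat{\Cm}_{n}(\Gamma\backslash(\widetilde{M}\times X),\mathbb{R})$ is a Banach space, the Hahn--Banach theorem provides a bounded functional $f$ on it with $||f||_{\infty}=1$, vanishing on the closed subspace $\overline{\partial\widehat{\Cm}_{n+1}(\Gamma\backslash(\widetilde{M}\times X),\mathbb{R})}$, and with $f(z)=d$. In particular $f\circ\partial=0$, so $f$ is a cocycle and, via Equation~\eqref{eq:isom:bc1}, determines a class $\lambda\coloneqq[f]\in\Hb^{n}(\Gamma\backslash(\widetilde{M}\times X),\mathbb{R})$ with $||\lambda||_{\infty}\le||f||_{\infty}=1$ and $\langle[\Gamma\backslash(\widetilde{M}\times X)],\lambda\rangle=f(z)=d=||M||$. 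Together with the previous paragraph this shows that the supremum is attained and equals $||M||$, which is exactly the assertion.

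I do not expect a genuine obstacle here: the proof is a routine transplant of the classical duality principle. The single point that deserves care --- and the reason the completion was set up in Section~\ref{sec:completion} --- is the passage to the Banach space $\widehat{\Cm}_{n}(\Gamma\backslash(\widetilde{M}\times X),\mathbb{R})$: it is what makes Hahn--Banach separate the point $z$ from a subspace of a \emph{complete} normed space and hence return a functional of norm \emph{exactly} one with value \emph{exactly} $d$ at $z$, and it is what lets the resulting cocycle be read back as a genuine class of $\Hb^{n}(\Gamma\backslash(\widetilde{M}\times X),\mathbb{R})$ through Equation~\eqref{eq:isom:bc1}. Since this construction, supplemented by the choice $\lambda=0$ when $d=0$, always yields an explicit maximizer, the supremum in the statement is in fact a maximum, as written.
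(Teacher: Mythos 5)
Your proposal is correct and follows essentially the same route as the paper: the paper's proof simply combines Theorem \ref{theorem:simplicial:volume} with a citation of the classical duality principle \cite[Lemma 6.1]{miolibro}, and what you have written is precisely the standard Hahn--Banach proof of that duality principle transplanted to the completed foliated chain complex via Equations \eqref{equation:simp:volume:l1} and \eqref{eq:isom:bc1}. Your explicit attention to why the completion is needed for Hahn--Banach to produce a norm-one maximizer is a correct and welcome elaboration of what the paper leaves implicit.
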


We immediately deduce the following vanishing criterion.

\begin{rec_cor}[\ref{corollary:vanishing}]
    Let $M$ be a triangulated oriented closed connected aspherical $n$-manifold. Let $p: \widetilde{M}\to M$ be its universal cover and consider an essentially free measure preserving action $\Gamma=\pi_1(M)\curvearrowright (X,\mu)$ on a standard Borel probability space. If $\Hmb^n(\Gamma\ltimes X;\mathbb{R})$ (equivalently $\Hb^n(\Gamma;\Linf(X,\mathbb{R}))\cong \Hb^n(M;\Linf(X,\mathbb{R}))$) vanishes, then also $||M||$ does. 
\end{rec_cor}
\begin{proof}
  This follows by Corollary \ref{corollary:duality} and by Theorem \ref{theorem:bounded:cohomology}.
\end{proof}

\begin{oss}
  In the above setting, assume that the action $\Gamma\curvearrowright (X,\mu)$ is \emph{amenable} in the sense of Zimmer \cite[Definition 4.3.1]{zimmer}. A special case of \cite[Theorem 4]{sarti:savini:groupoids} (see also \cite[Proposition 7.4.1]{monod:libro}) shows that $\Hmb^k(\Gamma\ltimes X;\mathbb{R})=0$ for every $k\geq 1$. 
  In particular, by Corollary \ref{corollary:vanishing} we have $||M||=0$. One may wonder whether this criterion provides new examples of manifolds with vanishing simplicial volume. In fact, this is not the case, since by \cite[Proposition 4.3.3]{zimmer} any amenable p.m.p action is given by an amenable group. Whence we fall into the classic vanishing results for manifolds with amenable fundamental group. In other words, interesting examples satisfying the hypothesis of Corollary \ref{corollary:duality} should be investigated outside the amenable world. 
\end{oss}

We conclude with a further application of Theorem \ref{theorem:bounded:cohomology} to \emph{transverse measured groupoids}.
\begin{defn}\label{def:cross}
  Let $\Gamma\curvearrowright (X,\mu)$ be a p.m.p action of a countable group on a standard Borel probability space.
  A \emph{cross-section} is a Borel subset $Y\subset X$ such that $Y$ meets every $\Gamma$-orbit. 
\end{defn}
We recall that such sections are known to exist even in the more general setting of measured groupoids (e.g. \cite[Theorem 2.8]{FHM} or \cite[Theorem 5.6]{ramsayJFA}).
It should be pointed out that, in general, $Y$ needs not to have positive measure in $X$. For example, for a transitive group action $\Gamma\curvearrowright \Gamma/\Gamma_0$ where $\Gamma_0<\Gamma$ is a closed subgroup (e.g. a lattice) a cross section is given by the singleton. For this reason, it makes no sense to consider the restriction groupoid with the measure induced by $\Gamma\ltimes X$ \cite[Section 2]{sarti:savini:groupoids}. Nevertheless, by classical results due to Connes and others, one can build a measure $\tau$ on $Y$, called \emph{transverse measure}, starting from the one on $X$ \cite{BHK}. Equipped with such measured structure, the restriction $\Gamma\ltimes X|_Y$ is called \emph{transverse measured groupoid} and carries some important information about the initial action. For instance, their measurable bounded cohomology coincide \cite{HS}. We refer to the fundational papers \cite{sarti:savini:groupoids,sarti:savini25} for the reader interested in the general theory of measurable bounded cohomology of measured groupoids. From the above considerations we deduce the following  
\begin{rec_cor}[\ref{corollary:transverse}]
  Let $M$ be a triangulated oriented closed connected aspherical $n$-manifold. Let $p: \widetilde{M}\to M$ be its universal cover and consider an essentially free measure preserving action $\Gamma=\pi_1(M)\curvearrowright (X,\mu)$ on a standard Borel probability space. Let $Y\subset X$ be any cross-section. If $\Hmb^n(\Gamma\ltimes X|_Y;\mathbb{R})$ vanishes, then also $||M||$ does. 
\end{rec_cor}
\begin{proof}
This follows by Corollary \ref{corollary:vanishing} and by the isomorphism $\Hmb^n(\Gamma\ltimes X;\mathbb{R})\cong \Hmb^n(\Gamma\ltimes X|_Y;\mathbb{R})$ \cite[Theorem 1]{HS}.
\end{proof}

  The above result is a potential source of new examples of manifolds with vanishing simplicial volume. Indeed, it would suffice finding a p.m.p action of the fundamental group and a transverse measured groupoid with vanishing bounded cohomology. 
  For instance, a (non-necessarily amenable) p.m.p action with a cross-section whose associated transverse measured groupoid is amenable would have this property \cite[Theorem 4]{sarti:savini:groupoids}. 
 However, no non-trivial examples of this scenario are currently known. In fact, the absence of explicit computations in bounded cohomology is even more pronounced in the context of groupoids, where no advanced computational techniques have yet been developed.

\bibliographystyle{alpha}

\bibliography{biblionote}

\end{document}